\newlength{\defbaselineskip}
\theoremstyle{plain}
\newtheorem{theorem}{Theorem}[section]
\newtheorem{proposition}[theorem]{Proposition}
\newtheorem{lemma}[theorem]{Lemma}
\theoremstyle{definition}
\newtheorem{definition}[theorem]{Definition}
\newtheorem{remark}[theorem]{Remark}
\newtheorem{example}[theorem]{Example}
\newtheorem{conjecture}[theorem]{Conjecture}
\newcommand{\calB}{\mathcal{B}}
\newcommand{\C}{\mathcal{C}}
\newcommand{\LL}{\mathcal{L}}
\newcommand{\calS}{\mathcal{S}}
\newcommand{\calT}{\mathcal{T}}
\newcommand{\Z}{\mathbb{Z}}
\newcommand{\maxk}{11}
\DeclareMathOperator{\area}{area}
\DeclareMathOperator{\arm}{arm}
\DeclareMathOperator{\Cat}{Cat}
\DeclareMathOperator{\Def}{Def}
\DeclareMathOperator{\defc}{defc}
\DeclareMathOperator{\dg}{dg}
\DeclareMathOperator{\dinv}{dinv}
\DeclareMathOperator{\leg}{leg}
\DeclareMathOperator{\Par}{Par}
\DeclareMathOperator{\lpart}{\textsc{left}}
\DeclareMathOperator{\mpart}{\textsc{mid}}
\DeclareMathOperator{\rpart}{\textsc{right}}
\DeclareMathOperator{\tail}{\textsc{tail}}
\DeclareMathOperator{\TI}{TI}
\newcommand{\DV}{\mathcal{DV}} 
\newcommand{\dvmap}{\textsc{dv}}
\newcommand{\DP}{\mathcal{DP}} 
\newcommand{\dpmap}{\textsc{dp}}
\newcommand{\mind}{\text{min}_{\Delta}}
\numberwithin{equation}{section}
\definecolor{darkgreen}{rgb}{0.0, 0.7, 0.0}
\definecolor{cyan}{cmyk}{1,0,0,0}
\newlength{\cellsize}
\newcommand\tableau[1]{
\vcenter{
\let\\=\cr
\baselineskip=-16000pt
\lineskiplimit=16000pt
\lineskip=0pt
\halign{&\tableaucell{##}\cr#1\crcr}}}
\newcommand{\tableaucell}[1]{{%
\def \arg{#1}\def \void{}%
\ifx \void \arg
\vbox to \cellsize{\vfil \hrule width \cellsize height 0pt}%
\else
\unitlength=\cellsize
\begin{picture}(1,1)
\put(0,0){\makebox(1,1){$#1$}}
\put(0,0){\line(1,0){1}}
\put(0,1){\line(1,0){1}}
\put(0,0){\line(0,1){1}}
\put(1,0){\line(0,1){1}}
\end{picture}%
\fi}}
\begin{document}

\title{Chain Decompositions of $q,t$-Catalan Numbers via Local Chains}
\subjclass[2010]{05A19, 05A17, 05E05}
\date{\today} 

\author{Seongjune Han}
\address{Department of Mathematics \\
University of Alabama \\
Tuscaloosa, AL 35401}
\email{shan25@crimson.ua.edu}

\author{Kyungyong Lee}
\address{Department of Mathematics \\
University of Alabama \\
Tuscaloosa, AL 35401 USA; \\
and Korea Institute for Advanced Study \\
Seoul 02455, Republic of Korea}
\email{kyungyong.lee@ua.edu; klee1@kias.re.kr}
\thanks{
The second author was supported by NSF grant DMS 1800207, the Korea Institute for
Advanced Study (KIAS), and the University of Alabama.}

\author{Li Li}
\address{Department of Mathematics and Statistics \\
 Oakland University \\ 
 Rochester, MI 48309}
\email{li2345@oakland.edu} 

\author{Nicholas A. Loehr}
\address{Department of Mathematics \\
 Virginia Tech \\
 Blacksburg, VA 24061-0123} %
\email{nloehr@vt.edu} 
\thanks{This work was supported by a grant
from the Simons Foundation/SFARI (Grant \#633564 to N.A.L.).}

\keywords{$q,t$-Catalan numbers, Dyck paths, dinv statistic, joint symmetry,
integer partitions, chain decompositions}
\begin{abstract}
The $q,t$-Catalan number $\Cat_n(q,t)$ enumerates integer partitions contained
in an $n\times n$ triangle by their dinv and external area statistics.
The paper~\cite{LLL18} proposed a new approach to understanding the
symmetry property $\Cat_n(q,t)=\Cat_n(t,q)$ based on decomposing the set
of all integer partitions into infinite chains. Each such global chain
$\C_{\mu}$ has an opposite chain $\C_{\mu^*}$; these combine to give a new 
small slice of $\Cat_n(q,t)$ that is symmetric in $q$ and $t$.
Here we advance the agenda of~\cite{LLL18} by developing a new general
method for building the global chains $\C_{\mu}$ from smaller elements
called \emph{local chains}. We define a \emph{local opposite property}
for local chains that implies the needed opposite property of the global 
chains. This local property is much easier to verify in specific cases
compared to the corresponding global property. We apply this machinery
to construct all global chains for partitions with deficit at most $\maxk$.
This proves that for all $n$, the terms in $\Cat_n(q,t)$ of degree
at least $\binom{n}{2}-\maxk$ are symmetric in $q$ and $t$.  
\end{abstract}
\maketitle

\section{Introduction}
\label{sec:intro}

The \emph{$q,t$-Catalan numbers} $\Cat_n(q,t)$ are polynomials in $q$ and $t$
that reduce to the ordinary Catalan numbers when $q=t=1$. These polynomials
play a prominent role in modern algebraic combinatorics, with connections
to representation theory, algebraic geometry, symmetric functions, knot
theory, and other areas. Garsia and Haiman~\cite{GH-qtcat} originally defined 
these polynomials as sums of complicated rational functions indexed
by integer partitions. Haglund~\cite{hag-qtcatconj} 
and Haiman independently discovered
elegant combinatorial interpretations of the $q,t$-Catalan numbers as weighted
sums of Dyck paths. Garsia and Haglund~\cite{GH-qtcatpf} proved that Haglund's
combinatorial formula was equivalent to the original definition. 
More background on $q,t$-Catalan numbers may be found in Haglund's 
book~\cite{hag-book} and in~\cite[Sec. 1]{LLL13}.

One version of the combinatorial formula for $\Cat_n(q,t)$ is a sum 
over Dyck paths weighted by statistics called \emph{area} and \emph{dinv}. 
We can regard a Dyck path as the southeast border of a partition diagram 
contained in the triangle $\Delta_n$ with vertices $(0,0)$, $(0,n)$, and 
$(n,n)$.  This lets us rewrite the formula for $\Cat_n(q,t)$ as a weighted sum 
over all integer partitions that fit in this triangle:
\begin{equation}\label{eq:def-qtcat}
 \Cat_n(q,t)=\sum_{\gamma\subseteq \Delta_n}
    q^{|\Delta_n|-|\gamma|}t^{\dinv(\gamma)}. 
\end{equation}
(See Section~\ref{subsec:ptn-stats} for the definition 
of $\dinv(\gamma)$ and other notation used in this formula.) 

It is known~\cite{GH-PNAS,GH-qtcatpf} 
(see also~\cite{carlsson-mellit,Haiman,Mellit})
that $\Cat_n(q,t)=\Cat_n(t,q)$ for every $n$, but it is a 
notoriously difficult open problem to give a combinatorial proof of this fact 
based on~\eqref{eq:def-qtcat} or related formulas. In~\cite{LLL18}, the last
three authors proposed an approach to this problem based on the following
ideas. Instead of focusing only on integer partitions contained in a
particular triangle $\Delta_n$, we consider the infinite set $\Par$ of all
integer partitions. We seek to decompose this set into a disjoint union
of \emph{chains} denoted $\C_{\mu}$, where each chain is indexed by
an integer partition $\mu$ called a \emph{deficit partition}. 
Each chain is an infinite sequence of
partitions such that dinv increases by $1$ as we move along the chain.
Moreover, for each $\gamma$ in the chain $\C_{\mu}$,
the \emph{deficit} statistic $\defc(\gamma)=|\gamma|-\dinv(\gamma)$ 
has the constant value $|\mu|$. Among other technical conditions,
the chains $\C_{\mu}$ must satisfy the following crucial \emph{opposite
property}. For each $n\geq 0$ and collection $\calS$ of partitions, define 
\begin{equation}\label{eq:Cat-n-mu}
 \Cat_{n,\calS}(q,t)=\sum_{\gamma\in\calS:\,\gamma\subseteq\Delta_n}
   q^{|\Delta_n|-|\gamma|}t^{\dinv(\gamma)}. 
\end{equation}
The opposite property asserts that for each $k$, there is an involution
$\mu\mapsto\mu^*$ on the set of partitions of $k$ such that for every $n\geq 0$,
\[ \Cat_{n,\C_{\mu}}(q,t)=\Cat_{n,\C_{\mu^*}}(t,q). \]
If such chains $\C_{\mu}$ can be constructed
for all partitions $\mu$ of a fixed $k$, then we can deduce
the joint symmetry of the terms in $\Cat_n(q,t)$ of degree $\binom{n}{2}-k$.
At a finer level, every pair $\C_{\mu}$ and $\C_{\mu^*}$ that we build
reveals a new ``small slice'' of the Catalan objects that is symmetric
in $q$ and $t$.  A remarkable feature of this setup is that the 
infinite chains $\C_{\mu}$ and $\C_{\mu^*}$ (which do not depend
on $n$) induce joint symmetry for all $n$ simultaneously.

Here is a brief summary of the main results in~\cite{LLL18} most relevant 
to our current work. Conjecture~6.9 of~\cite{LLL18} gives a complete
technical statement of the decomposition of $\Par$ into the chains
$\C_{\mu}$ outlined above. A version of this conjecture appears as
Conjecture~\ref{conj:global} below.
Section~2 of~\cite{LLL18} explicitly constructs the chains $\C_{\mu}$
for one-row partitions $\mu=(k)$. In this case, $\mu^*=\mu$, and
the self-opposite property $\Cat_{n,\C_{(k)}}(q,t)=\Cat_{n,\C_{(k)}}(t,q)$ is
proved in Section~3 of~\cite{LLL18}. Section~4 of~\cite{LLL18} constructs
the chains $\C_{\mu}$ for two-row partitions of the form
$\mu=(ab-b-1,b-1)$ and $\mu^*=(ab-a-1,a-1)$. The opposite property
for these chains is proved in Section~5 of~\cite{LLL18}. Finally,
with the aid of results in Section 6 and extensive computer calculations,
the online version of the appendix to~\cite{LLL18} presents
maps $\mu\mapsto\mu^*$ and chains $\C_{\mu}$ for all integer
partitions $\mu$ of size at most $9$. However, it should be emphasized
that the chains in this appendix were found through exhaustive computer
searches, not by any systematic construction. These searches become
impractical for $k\geq 10$.

The main contribution of this paper is a new general method for building the 
\emph{global} chains $\C_{\mu}$ by piecing together smaller \emph{local} chains.
The precise definition of a local chain is rather technical (see 
Section~\ref{subsec:ord-local-chain}), but here is the rough idea. 
For each partition $\gamma$, we must keep
track of the \emph{least} integer $n$ such that $\gamma\subseteq\Delta_n$;
this integer is denoted $\mind(\gamma)$. A local chain is a sequence
of partitions $\gamma(a),\gamma(a+1),\ldots,\gamma(b)$ such that 
$\dinv(\gamma(i))=i$ for $a\leq i\leq b$, $\defc(\gamma(i))$ is constant
for $a\leq i\leq b$, and the sequence $(\mind(\gamma(i)):a\leq i\leq b)$
has a certain staircase structure (described later).
We show how suitable local chains may be pasted together to form global 
chains. We introduce the idea of locally opposite local chains
and use this concept to prove the needed opposite property of the
global chains $\C_{\mu}$ and $\C_{\mu^*}$. The new local framework leads to 
much shorter and conceptually simpler proofs of the opposite property, 
compared to the very intricate computations that were given in Sections 3
and 5 of~\cite{LLL18}.  We give a new conjecture on writing
the set $\Par$ as a union of (partially overlapping) local chains,
and we prove that this conjecture implies the earlier conjecture
on the decomposition of $\Par$ into global chains. Finally, we construct
global chains $\C_{\mu}$ satisfying the new local conjecture for
all partitions $\mu$ of size at most \maxk. 
In contrast to~\cite{LLL18}, these global chains were found not through
exhaustive computer searches but rather by applying systematic operations
for building local chains. The full technical details of
these operations (in their general form) will be the subject of a future paper.

The rest of this article is organized as follows.
Section~\ref{sec:background} reviews the needed background material 
and definitions, which are included so that 
this paper can be read independently of~\cite{LLL18},
On the other hand, to avoid undue repetition of technical details,
we do refer to~\cite{LLL18} for the proofs of some specific results.
Section~\ref{sec:local-chains} develops the theory of local chains,
states the new structural conjecture for local chains, and
proves that this conjecture implies the previous conjecture for global chains.
Section~\ref{sec:specific-chains} presents global chains $\C_{\mu}$ 
for $|\mu|\leq \maxk$ 
and explains how to verify that these chains satisfy the local conjecture.  

\section{Background}
\label{sec:background}

This section reviews definitions and preliminary results on partitions,
Dyck vectors, and a map $\nu$ that is useful for constructing chains.

\subsection{Partition Statistics}
\label{subsec:ptn-stats}

An \emph{integer partition} is a weakly decreasing finite sequence of
positive integers. Given a partition $\gamma=(\gamma_1,\gamma_2,\ldots,
\gamma_s)$, let $\gamma_i=0$ for all $i>s$. Any of these zero parts may be
appended to the sequence $\gamma$ without changing the partition.
The \emph{length} of $\gamma$ is $\ell(\gamma)=s$, 
the number of strictly positive parts of $\gamma$. 
The \emph{diagram} of $\gamma$ is the set
\[ \dg(\gamma)=\{(i,j)\in\Z_{>0}\times\Z_{>0}:\ 
   1\leq i\leq\ell(\gamma),\ 1\leq j\leq \gamma_i\}. \]
We visualize the diagram as an array of left-justified unit squares
with $\gamma_i$ squares in the $i$th row from the top.
The \emph{conjugate partition} $\gamma'=(\gamma_1',\gamma_2',\ldots)$
is defined by letting $\gamma_j'$ be the number of cells
in the $j$th column of $\dg(\gamma)$, for $1\leq j\leq\gamma_1$.

The \emph{arm} of a cell $c=(i,j)$ in $\dg(\gamma)$ is
$\arm(c)=\lambda_i-j$, which is the number of cells strictly right
of $c$ in its row. The \emph{leg} of a cell $c=(i,j)$
in $\dg(\gamma)$ is $\leg(c)=\lambda_j'-i$, which is the number
of cells strictly below $c$ in its column. We can now define
the following partition statistics:

\begin{itemize}
\item The \emph{size} of $\gamma$ is $|\gamma|=\sum_{i\geq 1} \gamma_i$,
 which is the number of cells in the diagram of $\gamma$.
\item The \emph{diagonal inversion count}
 $\dinv(\gamma)$ is the number of cells $c$ in the diagram of
 $\gamma$ such that $\arm(c)-\leg(c)\in\{0,1\}$.
\item The \emph{deficit} of $\gamma$ is $\defc(\gamma)=|\gamma|-\dinv(\gamma)$,
 which is a nonnegative integer.
\item For each integer $n>0$, the \emph{$n$-triangle}
 $\Delta_n$ is the diagram of the partition $(n-1,n-2,\ldots,3,2,1)$.
\item The \emph{minimum triangle size} of $\gamma$, 
 denoted $\mind(\gamma)$, is the least integer $n$ such that
 $\dg(\gamma)\subseteq \Delta_n$. Equivalently, $\mind(\gamma)$
 is the least integer $n$ such that $\gamma_i\leq n-i$ 
 for $1\leq i\leq\ell(\gamma)$.
 (This statistic was denoted $\Delta(\gamma)$ in~\cite{LLL18}.)
\item For any $n$ such that $\dg(\gamma)\subseteq\Delta_n$, 
 the \emph{external area} of $\gamma$ relative to $\Delta_n$
 is $\area_n(\gamma)=|\Delta_n|-|\gamma|=\binom{n}{2}-|\gamma|$.
 This is the number of cells in the triangle $\Delta_n$ outside
 the diagram of $\gamma$.  
\end{itemize}

\begin{example}\label{ex:ptn-in-tri}
Let $\gamma=(5,4,1,1,1)$. This partition has length $\ell(\gamma)=5$,
size $|\gamma|=12$, diagonal inversion count $\dinv(\gamma)=8$,
deficit $\defc(\gamma)=4$, and minimum triangle size $\mind(\gamma)=6$.
Figure~\ref{fig:ptn-in-tri} shows the diagram of $\gamma$ embedded
in the non-minimal triangle $\Delta_7$. Counting the shaded cells,
we see that the external area of $\gamma$ relative to $\Delta_7$ is 
$\area_7(\gamma)=\binom{7}{2}-|\gamma|=9$, whereas $\area_6(\gamma)=3$.
The eight cells marked with a dot contribute to $\dinv(\gamma)$,
while the other four cells in the diagram of $\gamma$ contribute
to $\defc(\gamma)$. For example, the second cell $c$ in row $1$
contributes to $\defc(\gamma)$ since $\arm(c)=3$ and $\leg(c)=1$,
while the third cell $c'$ in row $1$ contributes to $\dinv(\gamma)$
since $\arm(c')=2$ and $\leg(c')=1$.  
\begin{figure}[h]
\begin{center}
\epsfig{file=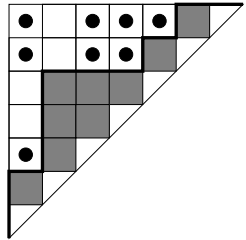,scale=0.7}
\end{center}
\caption{A partition contained in the triangle $\Delta_7$.}
\label{fig:ptn-in-tri}
\end{figure}
\end{example}

Next we define some special collections of integer partitions.

\begin{itemize}
\item Let $\Par$ be the set of all integer partitions.
\item Let $\Par(n)$ be the set of all integer partitions of size $n$.
\item Let $\DP(n)=\{\gamma\in\Par: \dg(\gamma)\subseteq\Delta_n\}$
be the set of all partitions whose diagrams fit in the $n$-triangle.
We call such partitions \emph{Dyck partitions of order $n$}
since these partitions correspond bijectively to Dyck paths of
order $n$ by taking the southeast border of $\gamma$ in $\Delta_n$
(see the thick shaded line in Figure~\ref{fig:ptn-in-tri}).
Observe that 
\begin{equation}\label{eq:DP-mind}
\DP(n)=\{\gamma\in\Par: \mind(\gamma)\leq n\}.
\end{equation}
\item Let $\Def(k)=\{\gamma\in\Par: \defc(\gamma)=k\}$ be 
the set of all partitions having deficit $k$. 
(This set was denoted $\DP_{\ast,k}$ in~\cite{LLL18}.)
\end{itemize}

Continuing Example~\ref{ex:ptn-in-tri}, note that the
partition $\gamma=(5,4,1,1,1)$ is a member of the sets
$\Par(12)$, $\Def(4)$, and $\DP(n)$ for all $n\geq 6$,
since $\mind(\gamma)=6$.

We can now rewrite the definition~\eqref{eq:def-qtcat} of
$q,t$-Catalan numbers as follows:
\[ \Cat_n(q,t)=\sum_{\gamma\in\DP(n)} q^{\area_n(\gamma)}t^{\dinv(\gamma)}. \]
For example, $\DP(3)=\{(0),(1),(2),(1,1),(2,1)\}$, and
\[ \Cat_3(q,t)=q^3+q^2t+qt+qt^2+t^3. \]

\subsection{Dyck Vectors}
\label{subsec:dyck-vec}

For many calculations involving dinv, it is convenient to use 
Dyck vectors instead of Dyck partitions. A \emph{Dyck vector} of order $n$
is a list $v=(v_1,v_2,\ldots,v_n)$ of nonnegative integers such that
$v_1=0$ and $v_{i+1}\leq v_i+1$ for $1\leq i<n$. Let $\DV(n)$ be the
set of Dyck vectors of order $n$. There is a bijective correspondence
between the sets $\DP(n)$ and $\DV(n)$, which can be defined pictorially
by drawing $\gamma\in\DP(n)$ inside $\Delta_n$ and letting $v_i$ be the 
number of external area cells in the $i$th row from the bottom. 
For example, letting $n=7$, the partition $\gamma=(5,4,1,1,1,0,0)$ shown in
Figure~\ref{fig:ptn-in-tri} maps to the Dyck vector $v=(0,1,1,2,3,1,1)$.
Formally, the bijection $\dvmap_n:\DP(n)\rightarrow\DV(n)$ and its
inverse $\dpmap_n:\DV(n)\rightarrow\DP(n)$ are given by these formulas:
\begin{equation}\label{eq:dvmap}
 \DV_n(\gamma_1,\gamma_2,\ldots,\gamma_n) 
 = (0-\gamma_n,1-\gamma_{n-1},\ldots,i-\gamma_{n-i},\ldots,n-1-\gamma_1); 
\end{equation}
\begin{equation}
\label{eq:dpmap}
\DP_n(v_1,v_2,\ldots,v_n)
 = (n-1-v_n,n-2-v_{n-1},\ldots,n-i-v_{n-i+1},\ldots,1-v_2,0-v_1).
\end{equation}

We can compute the statistics $\area_n$, $\dinv$, and $\defc$ directly
from the Dyck vector associated to a Dyck partition. In more detail,
for any Dyck vector $v\in\DV(n)$, define
\begin{align*}
 \area_n(v) &= v_1+v_2+\cdots+v_n; \\
 \dinv(v) &= \text{the number of $i<j$ with $v_i-v_j\in\{0,1\}$; } \\
 \defc(v) &= \binom{n}{2}-\area_n(v)-\dinv(v).
\end{align*}
The bijections defined above preserve all three statistics, so
that if $v=\dvmap_n(\gamma)$, then $\area_n(v)=\area_n(\gamma)$,
$\dinv(v)=\dinv(\gamma)$, and $\defc(v)=\defc(\gamma)$. The verification
of this assertion for dinv is not completely routine --- 
see~\cite[Lemma 4.4.1]{HHLRU} for details.

\subsection{The Successor Map $\nu$}
\label{subsec:map-nu}

This section recalls the definition and properties of 
the \emph{successor map} $\nu$, which is a function 
(defined on a subset of $\Par$) that suffices to 
construct ``almost all'' of the links in the global chains $\C_{\mu}$.
Intuitively, if $\gamma\in\C_{\mu}$ is a partition in the domain of $\nu$,
then $\nu(\gamma)$ is the next partition in the chain $\C_{\mu}$.

The domain of $\nu$ is 
$\{\gamma\in\Par: \gamma_1\leq \ell(\gamma)+2\}$. 
For $\gamma=(\gamma_1,\ldots,\gamma_\ell)$ in this domain, we define
\[ \nu(\gamma)=(\ell(\gamma)+1,\gamma_1-1,\gamma_2-1,\ldots,\gamma_\ell-1). \]
Pictorially, we obtain the diagram of $\nu(\gamma)$ from the diagram
of $\gamma$ by removing the leftmost column, then inserting a new top
row that is one cell longer than the removed column.
For example, $\nu(5,4,1,1,1)=(6,4,3,0,0,0)=(6,4,3)$, 
whereas $\nu(6,4,3)$ is undefined. The key property of $\nu$ 
(proved in Lemma~2.3 of~\cite{LLL18}) is that for all $\gamma$
in the domain of $\nu$,
\[ \dinv(\nu(\gamma))=\dinv(\gamma)+1\quad\mbox{and}\quad
   \defc(\nu(\gamma))=\defc(\gamma). \]
We may also conclude that $\area_n(\nu(\gamma))=\area_n(\gamma)-1$
if $\gamma$ and $\nu(\gamma)$ are both in $\DP(n)$. 

It is readily checked that the image of $\nu$ is the set
$\{\delta\in\Par: \delta_1\geq\ell(\delta)\}$. 
For a partition $\delta=(\delta_1,\ldots,\delta_s)$ in this set, we have
\[ \nu^{-1}(\delta)=(\delta_2+1,\delta_3+1,\ldots,\delta_s+1,
  \underline{1}^{\delta_1-\ell(\delta)}), \]
where the notation $\underline{1}^{\delta_1-\ell(\delta)}$
denotes $\delta_1-\ell(\delta)$ copies of $1$.
We say $\delta\in\Par$ is an \emph{initial} partition if
$\nu^{-1}(\delta)$ is undefined, i.e., $\delta_1<\ell(\delta)$.
We say $\gamma\in\Par$ is a \emph{final} partition if
$\nu(\gamma)$ is undefined, i.e., $\gamma_1>\ell(\gamma)+2$.

Remarkably, for every partition $\mu$, we can build the whole
infinite tail of the chain $\C_{\mu}$ by starting with a particular
partition $\TI(\mu)$ and applying $\nu$ repeatedly. Suppose $\mu$
has $n_1$ parts equal to $1$, $n_2$ parts equal to $2$, and so on.
Taking $N=\mu_1+\ell(\mu)+1$, the \emph{tail initiator partition} 
of type $\mu$ is
\[ \TI(\mu)=\dpmap_N(0,0,\underline{1}^{n_1},0,\underline{1}^{n_2},
   0,\ldots,0,\underline{1}^{n_{\mu_1}}). \]
(This partition was denoted $\gamma_{\mu}$ in~\cite{LLL18}.)
For example, $\mu=(4,3,1,1,1)$ has $n_1=3$, $n_2=0$, $n_3=n_4=1$, 
and $N=10$, so
\[ \TI(\mu)=\dpmap_{10}(0,0,1,1,1,0,0,1,0,1)=(8,8,6,6,5,3,2,1,1,0). \]
For any $\mu$, the Dyck vector associated with $\TI(\mu)$
starts with two $0$s and ends with a $1$, which implies that the length
of $\TI(\mu)$ is $1$ more than the longest part of $\TI(\mu)$.
Thus every $\TI(\mu)$ is an initial partition. Moreover,
it is shown in~\cite[Lemmas 6.7 and 6.8]{LLL18} that $\nu^m(\TI(\mu))$ 
is defined for every integer $m\geq 0$, $\defc(\TI(\mu))=|\mu|$,
and $\dinv(\TI(\mu))=\binom{\mu_1+\ell(\mu)+1}{2}-\ell(\mu)-|\mu|$. 
We call the set
\[ \tail(\mu)=\{\nu^m(\TI(\mu)):m\geq 0\} \]
the \emph{$\nu$-tail of the chain $\C_{\mu}$}.
Note that $\mu$ is uniquely determined by the sequence $\tail(\mu)$, 
as follows.  First, $\TI(\mu)$ is the unique object with minimum dinv 
in $\tail(\mu)$. Second, we can find the multiplicities of the parts of $\mu$
by counting consecutive $1$s in $\dvmap_N(\TI(\mu))$, where $N=\mind(\TI(\mu))$.

Now suppose $\gamma$ is any partition.
The \emph{$\nu$-segment generated by $\gamma$} is the set of partitions
obtained by applying $\nu$ and $\nu^{-1}$ to $\gamma$ as many times as
possible. Formally, the $\nu$-segment $\nu^*(\gamma)$ is the set of
all partitions $\nu^m(\gamma)$ for those integers $m$ such that
$\nu^m(\gamma)$ is defined. All $\nu$-tails are $\nu$-segments.
An example of a finite $\nu$-segment is
\[ \nu^*(5,2,2,2)=\{(3,3,3,1),(5,2,2,2),(5,4,1,1,1),(6,4,3)\}. \] 
Since $\nu$ and $\nu^{-1}$ are one-to-one
on their domains, the set $\Par$ and all of its subsets $\Def(k)$
are disjoint unions of $\nu$-segments.  We hope to express
each chain $\C_{\mu}$ as the union of certain (suitably chosen)
$\nu$-segments, one of which is the $\nu$-tail of $\C_{\mu}$. The hard
part of the construction is figuring out which $\nu$-segments can be
combined to make the needed opposite property hold.

\subsection{The Global Chain Decomposition Conjecture}
\label{subsec:global-conj}

We now have all the ingredients needed for the main structural
conjecture on global chains. The conjecture stated here consists of parts
(a), (b), (c), (d), (f), (g), and (j) of Conjecture~6.9 in~\cite{LLL18}. 
Our conjecture on local chains (given in~\S\ref{subsec:local-chain-conj})
implies this version of the global conjecture.  
Recall from~\eqref{eq:Cat-n-mu} the notation
\begin{equation}\label{eq:CatnS}
 \Cat_{n,\calS}(q,t)=\sum_{\gamma\in\calS\cap\DP(n)}
 q^{\area_n(\gamma)}t^{\dinv(\gamma)}. 
\end{equation}

\begin{conjecture}\label{conj:global}
There exist collections of partitions $\C_{\mu}$,
indexed by deficit partitions $\mu$, and a
size-preserving involution $\mu\mapsto\mu^*$
on $\Par$, satisfying the following conditions:
\begin{enumerate}
\item[(a)] The collections $\C_{\mu}$ are pairwise disjoint.  
\item[(b)] For all $\gamma\in\C_{\mu}$, $\defc(\gamma)=|\mu|$.
\item[(c)] Each $\C_{\mu}$ has the form 
 $\{C_{\mu}(a),C_{\mu}(a+1),C_{\mu}(a+2),\ldots\}$, \\
  where $a=\ell(\mu^*)$ and $\dinv(C_{\mu}(i))=i$ for all $i\geq a$.  
\item[(d)] Every $\gamma\in\Def(k)$ belongs to $\C_{\mu}$ for some
 $\mu\in\Par(k)$.
\item[(e)] For all $\mu$, $\tail(\mu)\subseteq\C_{\mu}$.
\item[(f)] For all $\gamma\in\C_{\mu}$, if $\nu(\gamma)$ is defined
 then $\nu(\gamma)\in\C_{\mu}$.  
\item[(g)] For all $n\geq 0$ and all $\mu$, 
 $\Cat_{n,\C_{\mu}}(q,t)=\Cat_{n,\C_{\mu^*}}(t,q)$.  
\end{enumerate}
\end{conjecture}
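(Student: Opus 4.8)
\emph{Proof strategy.}\quad
The plan is to reduce the opposite property~(g) to a finitely checkable combinatorial condition on ``local'' pieces of the chains, and then verify that condition for each deficit $k\le\maxk$. The starting point is that, for any chain $\C_\mu$ satisfying~(b) and~(c), the identity $\area_n(\gamma)+\dinv(\gamma)+\defc(\gamma)=\binom n2$ forces each $\gamma\in\C_\mu\cap\DP(n)$ with $\dinv(\gamma)=i$ to contribute exactly $q^{\binom n2-|\mu|-i}t^i$ to $\Cat_{n,\C_\mu}(q,t)$. Hence $\Cat_{n,\C_\mu}(q,t)$ is determined entirely by the \emph{visible dinv set} $S_\mu(n)=\{i:C_\mu(i)\text{ is defined and }\mind(C_\mu(i))\le n\}$, and, since $|\mu^*|=|\mu|$ and the exponent pairs $(\binom n2-|\mu|-i,\,i)$ are distinct, condition~(g) is equivalent to the purely set-theoretic identity
\[ S_\mu(n)=\bigl\{\,\tbinom n2-|\mu|-j\ :\ j\in S_{\mu^*}(n)\,\bigr\}\qquad\text{for all }n\ge0. \]
Thus the task becomes: describe exactly which dinv levels of $\C_\mu$ are visible inside $\Delta_n$, and show this set is the reverse-complement of the corresponding set for $\C_{\mu^*}$.

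Next I would assemble $\C_\mu$ from its $\nu$-tail $\tail(\mu)$ together with finitely many \emph{local chains} --- maximal blocks $C_\mu(a'),\dots,C_\mu(b')$ on which $i\mapsto\mind(C_\mu(i))$ has the prescribed staircase shape. Conditions~(a),~(b),~(d),~(e),~(f) are then organizational: (b) and~(c) are built into the definition of a local chain (constant deficit, dinv increasing by $1$); (e) and~(f) hold because $\tail(\mu)$ is one of the pieces and $\nu$ is injective with $\tail(\mu)$ closed under $\nu$; and (a) together with~(d) is exactly the assertion that the chosen local chains partition each $\Def(k)$, which is the content of the local conjecture of~\S\ref{subsec:local-chain-conj}. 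For $k\le\maxk$ this last point is established by exhibiting the explicit local chains of Section~\ref{sec:specific-chains} and checking that every partition of deficit $k$ lies in exactly one of them.

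The crux is~(g). I would introduce a \emph{local opposite property} pairing each local chain of $\C_\mu$ with a local chain of $\C_{\mu^*}$ (and $\tail(\mu)$ with $\tail(\mu^*)$) so that for every $n$ a matched pair contributes a block of dinv levels to $S_\mu(n)$ and a block to $S_{\mu^*}(n)$ that are already reverse-complementary \emph{within that pair}; summing over pairs then yields the displayed identity. The reason such a condition is checkable without quantifying over $n$ is precisely the staircase structure: for a local chain whose $\mind$-values form a staircase, intersecting with $\DP(n)=\{\gamma:\mind(\gamma)\le n\}$ always produces a contiguous run of dinv levels whose length is an explicit elementary function of $n$, so a single comparison of the two staircases certifies the reverse-complement relation for all $n$ simultaneously. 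For the $\nu$-tails one argues directly from the known values $\dinv(\TI(\mu))=\binom{\mu_1+\ell(\mu)+1}{2}-\ell(\mu)-|\mu|$ and the eventual linear growth of $\mind$ along $\nu^m(\TI(\mu))$, verifying that $\tail(\mu)$ and $\tail(\mu^*)$ are reverse-complementary in every $\Delta_n$.

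The main obstacle I anticipate is the implication ``local opposite $\Longrightarrow$ global opposite,'' which amounts to pinning down the staircase structure so that (i) the intersection of each local chain with $\DP(n)$ really is a contiguous dinv-block for every $n$, (ii) consecutive local chains inside $\C_\mu$ glue together with neither gaps nor overlaps in dinv, and (iii) the matched tails' asymptotic contributions align. Once the definitions are fixed, the implication itself should follow from a uniform-in-$n$ induction in which incrementing $n$ adds a controlled set of new dinv levels to both $S_\mu(n)$ and $S_{\mu^*}(n)$. The remaining work --- constructing the local chains for all $|\mu|\le\maxk$ and performing the finitely many local-opposite checks --- is then mechanical but lengthy, and is where the computer calculations referenced in Section~\ref{sec:specific-chains} come in.
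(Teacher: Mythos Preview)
Your overall strategy---reduce~(g) to a condition on the $\mind$-sequences, break each $\C_\mu$ into overlapping local chains with staircase $\mind$-profiles, pair the local chains of $\C_\mu$ with those of $\C_{\mu^*}$, and verify a finite numerical condition for each $k\le\maxk$---is exactly the route the paper takes. Two points in your outline would not go through as written, however.

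First, the pairing of exceptional local chains is misdescribed. You propose to pair $\tail(\mu)$ with $\tail(\mu^*)$ and check they are reverse-complementary. In fact the local involution \emph{reverses the order} of the local chains: if $\C_\mu=\calS_0\cup\calS_1\cup\cdots\cup\calS_c$ with $\calS_0$ a singleton and $\calS_c=\tail(\mu)$, then $\C_{\mu^*}=\calS_c^*\cup\cdots\cup\calS_0^*$, so $\tail(\mu)=\calS_c$ is locally opposite to the \emph{singleton} $\calS_c^*$ at the start of $\C_{\mu^*}$, and symmetrically $\tail(\mu^*)=\calS_0^*$ is opposite to the singleton $\calS_0$. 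Two infinite tails cannot be locally opposite to one another, and the numerical identity $a_{\calS}+m_{\calS}+k+a'_{\calT}=\binom{h_{\calS}}{2}$ in~\eqref{eq:loc-opp} needs a finite right endpoint $a'_{\calT}$ on one side. This is also what makes the claim $a=\ell(\mu^*)$ in~(c) come out: the starting dinv of $\C_\mu$ is read off from the singleton opposite to $\tail(\mu^*)$.

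Second, your justification of~(f) (``$\tail(\mu)$ is one of the pieces and $\nu$ is injective with $\tail(\mu)$ closed under $\nu$'') only shows the tail is $\nu$-closed, not the whole chain. The paper needs the extra hypothesis that each $\lpart(\calS)\cup\mpart(\calS)$ is itself a union of $\nu$-segments (condition~(f) of the local conjecture), and then~(f) for $\C_\mu$ follows by writing $\C_\mu$ as the disjoint union of these left-plus-middle pieces. Finally, while your ``contiguous block for each local chain, then combine'' picture is correct in spirit, the clean device that makes the global opposite property a one-line computation is to express the full $\mind$-sequence $F_\mu$ as the \emph{pointwise minimum} of the infinite staircases attached to the left parts of the $\calS_j$ (Lemmas~\ref{lem:min-stair}--\ref{lem:dec-min}); this absorbs the overlaps between consecutive local chains automatically and reduces~(g) to the single numerical check~\eqref{eq:LG-hyp}, with no induction on $n$ required.
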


Parts (a), (b), and (d) say that each set $\Def(k)$ is the disjoint union
of the chains $\C_{\mu}$ with $\mu\in\Par(k)$. Parts (b) and (c) say that
each chain $\C_{\mu}$ is an infinite string of objects of deficit
$|\mu|$, where dinv increases by $1$ as we move along the string,
and the first object in the string has dinv equal to $\ell(\mu^*)$.  
Part (e) says that the ``right end'' of $\C_{\mu}$ is the
$\nu$-tail $\tail(\mu)$. (More generally, Conjecture~6.9(g) of~\cite{LLL18}
asserts that each $\C_{\mu}$ is closed under $\nu$ and is therefore a union 
of $\nu$-segments, but this cannot be deduced from our local conjecture.) 
Part (f) says that each chain $\C_{\mu}$ is closed under $\nu$
(and hence is closed under $\nu^{-1}$). Part (f) is equivalent to
requiring that for all $\gamma\in\C_{\mu}$, 
the whole $\nu$-segment $\nu^*(\gamma)$ is contained in $\C_{\mu}$.  
Part (g) is the crucial \emph{global opposite property} of the global chains.  
We note that $\mu^*$ is usually not the transpose of $\mu$,
and $\mu^*=\mu$ can occur. For instance, $(k)^*=(k)$ as seen in~\cite{LLL18}.  

\begin{remark}
By invoking a deep result from~\cite{LW}, we can prove that there is
a way to satisfy conditions (a) through (d) of Conjecture~\ref{conj:global}.
Specifically, the main result of~\cite{LW} yields an explicit (but extremely
intricate) bijection $\Phi$ on $\Par$ that preserves size and sends dinv
to the length of the first part:
\[ |\Phi(\gamma)|=|\gamma|\mbox{ and }
   \Phi(\gamma)_1=\dinv(\gamma)\mbox{ for all $\gamma\in\Par$.} \]
For each partition $\mu$, let $\calB_{\mu}$ be the set of partitions
obtained by adding a new longest part to $\mu$ of any size $i\geq a$,
where $a=\mu_1=\ell(\mu')$.  (So $\mu^*$ is $\mu'$ in this remark.)
Let $\C_{\mu}=\{\Phi^{-1}(\gamma):\gamma\in\calB_{\mu}\}$.
By the properties of $\Phi$ cited above, each object $(i,\mu)$ in $\calB_{\mu}$
maps to an object $C_{\mu}(i)$ in $\C_{\mu}$ having dinv equal to $i$
and deficit equal to $|\mu|$. Since $\Par$ is clearly the disjoint union of the
sets $\calB_{\mu}$, $\Par$ is also the disjoint union of the sets $\C_{\mu}$. 
But, we have checked that the chains $\C_{\mu}$ in this remark
do \emph{not} satisfy the opposite condition~\ref{conj:global}(g).  
\end{remark}

\section{Local Chains}
\label{sec:local-chains}

This section develops the theory of local chains.
Section~\ref{subsec:opp-mind} begins by relating the opposite property
of global chains to the sequence of $\mind$ values of objects in the chains.
The next three subsections study staircase sequences and sequences
built from these by a pointwise minimum operation. It turns out that
the needed opposite property has a remarkably simple proof in this
abstract setting. The formal definition of local chains appears
in Sections~\ref{subsec:ord-local-chain} and~\ref{subsec:exc-local-chain}. 
We state our main conjecture on local chains in 
Section~\ref{subsec:local-chain-conj}, and we 
show how Conjecture~\ref{conj:global} follows from this local conjecture.

\subsection{The Opposite Property for $\mind$-Sequences}
\label{subsec:opp-mind}

Suppose $\calS=(\gamma(i): i\geq a)$ is a sequence
of partitions in $\Def(k)$ such that $\dinv(\gamma(i))=i$ for all $i\geq a$.
We have $\defc(\gamma(i))=k$ and $|\gamma(i)|=k+i$ for all $i\geq a$.
To compute the polynomials $\Cat_{n,\calS}(q,t)$, we consider
the function (sequence) $F:\Z_{\geq a}\rightarrow \Z$ defined by 
$F(i)=\mind(\gamma(i))$ for all $i\geq a$.
We call $F$ the \emph{$\mind$-sequence associated with $\calS$}.
By~\eqref{eq:DP-mind} and~\eqref{eq:CatnS}, for any $n\geq 0$ we have
\begin{equation}\label{eq:Cat-from-F}
 \Cat_{n,\calS}(q,t)=\sum_{i:\ F(i)\leq n} q^{\binom{n}{2}-k-i}t^i.
\end{equation}

Next suppose $\calS^*=(\delta(j): j\geq b)$ is another sequence
of partitions in $\Def(k)$ with $\dinv(\delta(j))=j$ for all $j\geq b$.
Let $G:\Z_{\geq b}\rightarrow\Z$ be the associated $\mind$-sequence given by
$G(j)=\mind(\delta(j))$ for all $j\geq b$. For all $n\geq 0$,
$\Cat_{n,\calS^*}(t,q) = \sum_{j:\ G(j)\leq n} t^{\binom{n}{2}-k-j}q^j$. 
Making the change of variable $i=\binom{n}{2}-k-j$, we have
\[ \Cat_{n,\calS^*}(t,q) = \sum_{i:\ G(\binom{n}{2}-k-i)\leq n}
 q^{\binom{n}{2}-k-i}t^{i}. \]
Comparing to~\eqref{eq:Cat-from-F}, we see that the sequences
$\calS$ and $\calS^*$ have the opposite property (namely, 
$\Cat_{n,\calS^*}(t,q) =\Cat_{n,\calS}(q,t)$ for all $n$) 
iff $F$ and $G$ are related by the following condition:
\begin{equation}\label{eq:opposite-fns}
\text{for all integers $n$ and $i$, } 
 F(i)\leq n\Leftrightarrow G\left(\binom{n}{2}-k-i\right)\leq n.
\end{equation}
Here and below, a statement such as ``$G(\binom{n}{2}-k-i)\leq n$''
is an abbreviation for ``$\binom{n}{2}-k-i$ is in the domain of $G$
and $G(\binom{n}{2}-k-i)\leq n$.'' By definition, we say that
any two functions $F$ and $G$ satisfying~\eqref{eq:opposite-fns} 
have the \emph{opposite property for deficit $k$}.

\subsection{Staircase Sequences}
\label{subsec:staircase-seq}

We intend to build functions ($\mind$-sequences) having the 
opposite property~\eqref{eq:opposite-fns} by taking
the pointwise minimum of certain other sequences with special structure.
These latter sequences are defined as follows.

\begin{definition}\label{def:staircase}
Given integers $a,m,h$, the \emph{infinite $(a,m,h)$-staircase}
is the  function (sequence) $F:\Z_{\geq a}\rightarrow \Z$ such that
the values $(F(a),F(a+1),F(a+2),\ldots)$ consist of 
$m+1$ copies of $h$,
followed by $h$ copies of $h+1$, 
followed by $h+1$ copies of $h+2$, 
followed by $h+2$ copies of $h+3$, and so on.
A \emph{finite $(a,m,h)$-staircase} is any finite prefix of the
sequence $F$, that is, a function obtained by restricting $F$
to a domain of the form $\{a,a+1,\ldots,b\}$.
\end{definition}

Here is an explicit formula for the values of the infinite
$(a,m,h)$-staircase $F$: 
\[ \begin{array}{lcl}
F(i)=h &\mbox{ if }& a\leq i\leq a+m; \\
F(i)=h+1 &\mbox{ if }& a+m+1\leq i\leq a+m+h; \\
F(i)=h+2 &\mbox{ if }& a+m+h+1\leq i\leq a+m+h+(h+1); \\
F(i)=h+3 &\mbox{ if }& a+m+h+(h+1)+1\leq i\leq a+m+h+(h+1)+(h+2); \\
\ldots & & \ldots \\
F(i)=h+p &\mbox{ if }& a+m+h+(h+1)+\cdots + (h+p-2)+1 \leq i
  \\ & & \qquad \leq a+m+h+(h+1)+\cdots+(h+p-1).
\end{array} \] 
So (taking $h+p=n$ above) 
\begin{equation}\label{eq:stair-val}
\mbox{for all $n>h$, }
 F(i)=n \ \Leftrightarrow\ a+m+\binom{n-1}{2}-\binom{h}{2} < i
  \leq a+m+\binom{n}{2}-\binom{h}{2}.  
\end{equation}

The following result shows that staircase sequences arise naturally
by taking the $\mind$-sequence associated with a $\nu$-segment or $\nu$-tail.
The special case of a $\nu$-tail was proved in~\cite[Lemma 6.8(2)]{LLL18}.

\begin{proposition}\label{prop:nu-stair}
Suppose $\gamma$ is a partition with $\dinv(\gamma)=a$, $\mind(\gamma)=n$,
and $m\geq 0$ is the least integer such that $\gamma_{m+1}'=n-m-1$.
Let $I$ be the set of $i\geq 0$ such that $\nu^i(\gamma)$ is defined. 
Then the sequence $F$ with domain $\{a+i: i\in I\}$ given by
$F(a+i)=\mind(\nu^i(\gamma))$ is an $(a,m,n)$-staircase.  
\end{proposition}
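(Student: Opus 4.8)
The plan is to understand precisely how $\mind$ changes when we apply $\nu$, and then show this change matches the staircase pattern in Definition~\ref{def:staircase}. The key identity to establish is: if $\delta$ is in the domain of $\nu$, then $\mind(\nu(\delta))$ equals either $\mind(\delta)$ or $\mind(\delta)+1$, with the increment-by-one happening exactly when the diagram of $\delta$ ``touches'' the diagonal border of $\Delta_{\mind(\delta)}$ in a suitable sense. Recall that $\mind(\gamma)$ is the least $n$ with $\gamma_i\le n-i$ for all $i$, equivalently with $\gamma_j'=\#\{i: \gamma_i\ge j\}\le n-j$ for all $j$ (using the conjugate description). Since $\nu(\delta)$ is obtained by deleting the first column of $\delta$ (shifting every column index down by one) and prepending a new top row of length $\ell(\delta)+1$, the conjugate of $\nu(\delta)$ satisfies $(\nu(\delta))_j' = \delta_{j+1}' + 1$ for $j\ge 1$ (the $+1$ is the new top-row cell), while $(\nu(\delta))_1' = \ell(\delta)+1$. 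Working through these relations should show $\mind(\nu(\delta)) = \max(\mind(\delta), \text{something controlled}) $ and pin down the jump condition.

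Next I would iterate. Set $n=\mind(\gamma)$. The first step is to show that $\mind(\nu^i(\gamma))$ stays equal to $n$ for exactly $m+1$ consecutive values of $i$ (namely $i=0,1,\dots,m$), where $m$ is the least integer with $\gamma_{m+1}' = n-m-1$; this $m$ is precisely measuring ``how far the diagram currently is'' from forcing $\mind$ to increase, and each application of $\nu$ erodes this slack by one because of the column-shift. Then, once the jump to $n+1$ occurs, I would argue that the new partition $\nu^{m+1}(\gamma)$ sits inside $\Delta_{n+1}$ with slack parameter $h=n$ (i.e., the analogous $m$-value recomputed for $\nu^{m+1}(\gamma)$ relative to $\Delta_{n+1}$ equals $n-1$), so that $\mind$ then stays at $n+1$ for exactly $n$ more steps, then at $n+2$ for exactly $n+1$ more steps, and so on. This is exactly the staircase pattern: $m+1$ copies of $h=n$, then $h$ copies of $h+1$, then $h+1$ copies of $h+2$, etc. The cleanest way to run this is probably an induction where the inductive hypothesis says $\nu^j(\gamma)$ satisfies the hypotheses of the proposition with parameters $(a+j, m', n')$ for the appropriate shifted values, so that a single ``one-step'' lemma, applied repeatedly, assembles the whole staircase; the cited special case \cite[Lemma 6.8(2)]{LLL18} for $\nu$-tails should fall out, and may also be directly reusable to shortcut part of the argument.

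The main obstacle is the bookkeeping in the one-step lemma: showing that after the jump the new slack parameter is exactly $n-1$ (relative to the new triangle $\Delta_{n+1}$), not something smaller or larger. One has to verify that deleting the first column and prepending a row of length $\ell+1$ interacts correctly with the condition $\gamma_{m+1}' = n-m-1$ — in particular that the new top row, which is the longest part of $\nu(\gamma)$, does not itself violate containment in $\Delta_{n+1}$ and does not by itself force a further jump, and that the row that was ``tight'' against the diagonal in $\Delta_n$ moves to a position where it is tight against the diagonal in $\Delta_{n+1}$ with the expected amount of new slack. I expect this to reduce to a short computation with the conjugate-partition inequalities $(\nu(\delta))_j' = \delta'_{j+1}+1$ together with the formula $\dinv(\nu(\delta))=\dinv(\delta)+1$ (so the domain index advances by exactly $1$ each step, matching $F(a+i)$), but getting the edge cases (small $i$, the behavior when $I$ is finite because $\gamma$ reaches a final partition) stated cleanly is where the care is needed. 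A sanity check against the worked example $\nu^*(5,2,2,2)$ and against formula~\eqref{eq:stair-val} would confirm the indexing constants $\binom{n-1}{2}-\binom{h}{2}$ come out right.
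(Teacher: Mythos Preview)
Your plan is correct and matches the paper's approach: both track the least column index $j$ at which $\gamma_j'$ attains the maximal value $n-j$ allowed by $\Delta_n$, show this index drops by one with each application of $\nu$ until it reaches $1$, and then verify that $\mind$ jumps by one and the index resets to $n$. The paper phrases this geometrically via a ``first-return point'' (the lowest cell of $\dg(\gamma)$ touching the diagonal of $\Delta_n$), letting the column-removal/row-addition picture carry the argument visually, whereas you propose the equivalent computation through the conjugate identity $(\nu(\delta))_j'=\delta_{j+1}'+1$; one small correction: your side remark $(\nu(\delta))_1'=\ell(\delta)+1$ is not right in general (it equals $1+\delta_2'$, which differs when $\delta$ has parts equal to $1$), though this does not affect the argument.
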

\begin{proof}
We first note that the value $m$ has the following geometric interpretation.
Draw the diagram of $\gamma$ inside the minimal triangle $\Delta_n$ 
and look for the lowest point where a cell in the diagram touches the 
diagonal boundary $y=x$ of $\Delta_n$. This point has coordinates $(m+1,m+1)$,
as is readily checked. We call this point the \emph{first-return point}
for $\gamma$ (relative to $\Delta_n$).  

Suppose $m\geq 1$ and $\nu(\gamma)$ is defined. As noted earlier,
we obtain the diagram of $\nu(\gamma)$ by removing the leftmost column
of $\dg(\gamma)$ and making a new top row that is $1$ cell longer. Since
the diagram of $\gamma$ does not touch $(1,1)$, the new diagram still
fits in $\Delta_n$ with first-return point $(m,m)$. Thus, 
$\mind(\nu(\gamma))=n$.  Similarly, if $m\geq 2$ and $\nu^2(\gamma)$
is defined, then the diagram of $\nu^2(\gamma)$ still fits in
$\Delta_n$ with first-return point $(m-1,m-1)$. For any $m\geq 0$,
we can apply this reasoning for $m$ steps (always assuming the relevant 
powers $\nu^i(\gamma)$ are defined), until we eventually obtain the 
diagram of $\nu^m(\gamma)$ inside $\Delta_n$ with first-return point $(1,1)$.
Thus the first $m+1$ values of $F$ are $n$, as needed.
See Figure~\ref{fig:mind-seq} for an example where
$\gamma=(5,5,3,3,1)$, $a=14$, $n=7$, $m=2$, and the $F$-sequence
starts $(\underline{7}^3,\underline{8}^7,\underline{9}^8,
 \underline{10}^9,\ldots)$.

\begin{figure}[h]
\begin{center}
\epsfig{file=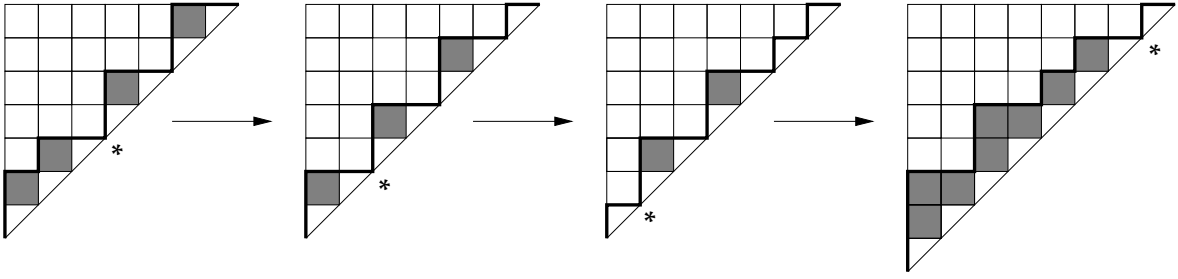,scale=0.7}
\end{center}
\caption{Finding the minimum triangle sizes for the sequence
 $(\nu^i(\gamma): i\geq 0)$. The first-return point
 for each object is starred.}
\label{fig:mind-seq}
\end{figure}

Now consider $\nu^{m+1}(\gamma)$ (if defined). Here we remove the
first column of size $n-1$ and add a new first row of size $n$. 
This new row no longer fits inside $\Delta_n$, but it does fit
inside $\Delta_{n+1}$. We conclude that $\mind(\nu^{m+1}(\gamma))=n+1$,
and the first-return point is now $(n,n)$. Repeating the reasoning
in the previous paragraph, we see that the next $n$ partitions
$\nu^{m+1}(\gamma),\ldots,\nu^{m+n}(\gamma)$ (if defined) will
all have minimum triangle size $n+1$, as the first-return point
moves from $(n,n)$ to $(1,1)$ one step at a time. After that,
the next $n+1$ partitions will have minimum triangle size $n+2$,
with first-return point moving from $(n+1,n+1)$ to $(1,1)$. This
reasoning can be continued forever, unless we eventually reach
a final partition where $\nu$ is undefined. In either case, we
have proved that the sequence $F$ is an $(a,m,n)$-staircase.  
\end{proof}

\subsection{The Pointwise Minimum of Staircase Sequences}
\label{subsec:min-of-stair}

Suppose $F_1,\ldots,F_s$ are integer-valued sequences with respective
domains $D_1,\ldots,D_s\subseteq\Z$. The \emph{pointwise minimum}
of $F_1,\ldots,F_s$ is the sequence $F$ with domain $D=D_1\cup\cdots\cup D_s$
such that for each $j\in D$, 
$F(j)$ is the least integer in the set $\{F_i(j): j\in D_i\}$.
We write $F=\min(F_1,\ldots,F_s)$. 
The next lemma gives an explicit description of the sets
$\{ i: F(i)\leq n \}$ in the case where $F$ is the pointwise minimum
of staircase sequences. For any $a,b\in\Z$, we use the notation 
$[a,b]=\{a,a+1,\ldots,b\}$ for an interval of consecutive integers. 
This interval is the empty set if $a>b$.

\begin{lemma}\label{lem:min-stair}
Let $F_j$ be the infinite $(a_j,m_j,h_j)$-staircase for $1\leq j\leq s$,
and let $F=\min(F_1,\ldots,F_s)$.  For every integer $n\geq 0$,
\begin{equation}\label{eq:lem-min-stair}
 \{ i: F(i)\leq n \} = 
 \bigcup_{j:\ n\geq h_j} \left[a_j, a_j+m_j+\binom{n}{2}-\binom{h_j}{2}\right].
\end{equation}
\end{lemma}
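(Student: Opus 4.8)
The plan is to reduce the claim to the single–staircase case and then combine. First I would record the elementary fact that for any index $i$ and any integer $n\ge 0$, one has $F(i)\le n$ if and only if $F_j(i)\le n$ for \emph{some} $j$ with $i\in D_j$; this is immediate from the definition of pointwise minimum (note $D_j=\Z_{\ge a_j}$ here). Hence
\[
 \{i: F(i)\le n\}=\bigcup_{j=1}^s \{i\ge a_j: F_j(i)\le n\},
\]
and it suffices to show that for a single infinite $(a_j,m_j,h_j)$-staircase $F_j$, the set $\{i\ge a_j: F_j(i)\le n\}$ equals the interval $[a_j,\ a_j+m_j+\binom{n}{2}-\binom{h_j}{2}]$ when $n\ge h_j$ and is empty when $n<h_j$.

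Second I would establish this single-staircase statement from the explicit formula already displayed in the excerpt. Because an $(a,m,h)$-staircase is weakly increasing and its smallest value is $h$, the set $\{i\ge a: F(i)\le n\}$ is empty when $n<h$ (handling the case $n<h_j$), and when $n\ge h$ it is a down-set of the form $[a, M_n]$ for some endpoint $M_n$. To identify $M_n$ I would split into two subcases. If $n=h$, then $F(i)\le n$ exactly on the first $m+1$ values, i.e.\ $i\in[a,a+m]$, and indeed $m+\binom{n}{2}-\binom{h}{2}=m$. If $n>h$, I invoke the displayed equivalence~\eqref{eq:stair-val}, which says $F(i)=n'$ iff $a+m+\binom{n'-1}{2}-\binom{h}{2}<i\le a+m+\binom{n'}{2}-\binom{h}{2}$ for $n'>h$; summing the condition $F(i)\le n$ over the values $n'=h+1,\dots,n$ telescopes to $a+m<i\le a+m+\binom{n}{2}-\binom{h}{2}$, and adjoining the initial block $[a,a+m]$ where $F(i)=h\le n$ gives exactly $[a,\,a+m+\binom{n}{2}-\binom{h}{2}]$. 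That is the desired right endpoint.

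Third, I would assemble the pieces: taking the union over $j$ of the single-staircase descriptions, the terms with $n<h_j$ contribute the empty set and drop out, while each term with $n\ge h_j$ contributes $[a_j,\,a_j+m_j+\binom{n}{2}-\binom{h_j}{2}]$, which is precisely the right-hand side of~\eqref{eq:lem-min-stair}. This completes the proof.

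The main (mild) obstacle is purely bookkeeping: getting the telescoping sum of block lengths right and correctly folding in the boundary case $n=h_j$ so that the uniform formula $a_j+m_j+\binom{n}{2}-\binom{h_j}{2}$ covers $n\ge h_j$ with no off-by-one error. Everything else is a direct consequence of the definition of pointwise minimum together with the explicit staircase formula and~\eqref{eq:stair-val} already recorded in the text; no new structural input is needed.
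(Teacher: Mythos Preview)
Your proof is correct and follows essentially the same route as the paper: reduce to a single staircase via the definition of pointwise minimum, use the explicit staircase formula (and~\eqref{eq:stair-val}) to identify $\{i:F_j(i)\le n\}$ as the stated interval when $n\ge h_j$ and as $\emptyset$ otherwise, then take the union. The only difference is that the paper states the single-staircase identity in one line without separating the $n=h_j$ and $n>h_j$ cases.
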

\begin{proof}
By~\eqref{eq:stair-val}, for $1\leq j\leq s$ and any $n\geq h_j$,
\[ \{ i: F_j(i)\leq n \} 
 = \left[a_j, a_j+m_j+\binom{n}{2}-\binom{h_j}{2}\right]. \] 
On the other hand, if $n<h_j$, then $\{ i: F_j(i)\leq n \}=\emptyset$.
Fix $n\geq 0$ and $i$ in the domain of $F$. By definition of pointwise
minimum and since $F_j$ has minimum value $h_j$,
\begin{align*}
F(i)\leq n &\Leftrightarrow
 \text{for some $j$, $F_j(i)\leq n$} 
\\ &\Leftrightarrow
 \text{for some $j$, $n\geq h_j$ and $F_j(i)\leq n$}
\\ &\Leftrightarrow
 i\in \bigcup_{j:\ n\geq h_j} 
   \left[a_j, a_j+m_j+\binom{n}{2}-\binom{h_j}{2}\right]. \qedhere
\end{align*}
\end{proof}

\subsection{The Opposite Property for Minima of Staircases}
\label{subsec:opp-prop-lemma}

\begin{lemma}\label{lem:LG}
Fix a deficit value $k\geq 0$, an integer $N\geq 2$,
and nonnegative integers $a_j,b_j,m_j,h_j$ for $0<j<N$.  
For $0<j<N$, let $F_j$ be the infinite $(a_j,m_j,h_j)$-staircase,
let $G_j$ be the infinite $(b_{N-j},m_{N-j},h_{N-j})$-staircase,
let $F=\min(F_j)$, and let $G=\min(G_j)$. Assume that
\begin{equation}\label{eq:LG-hyp}
 a_j+b_j+m_j+k=\binom{h_j}{2}\mbox{ for $0<j<N$.}
\end{equation} 
Then $F$ and $G$ have the opposite property~\eqref{eq:opposite-fns}
 for deficit $k$.
\end{lemma}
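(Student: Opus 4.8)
The plan is to reduce the opposite property~\eqref{eq:opposite-fns} for $F$ and $G$ to the explicit interval description of their sublevel sets provided by Lemma~\ref{lem:min-stair}, and then verify the resulting set-theoretic equivalence directly using the hypothesis~\eqref{eq:LG-hyp}. Concretely, fix integers $n$ and $i$; we must show $F(i)\le n$ if and only if $G\!\left(\binom{n}{2}-k-i\right)\le n$. By Lemma~\ref{lem:min-stair} applied to $F=\min(F_j)$,
\[ F(i)\le n \iff i\in\bigcup_{j:\ n\ge h_j}\left[a_j,\ a_j+m_j+\binom{n}{2}-\binom{h_j}{2}\right]. \]
Similarly, since $G=\min(G_j)$ where $G_j$ is the $(b_{N-j},m_{N-j},h_{N-j})$-staircase, re-indexing $j\mapsto N-j$ gives
\[ G(i')\le n \iff i'\in\bigcup_{j:\ n\ge h_j}\left[b_j,\ b_j+m_j+\binom{n}{2}-\binom{h_j}{2}\right]. \]
(Here I would note that the index $j$ ranges over $0<j<N$ in both unions, after the re-indexing, so the two families are indexed by the same set and can be compared term by term.)

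Next I would substitute $i'=\binom{n}{2}-k-i$ into the description of $\{i':G(i')\le n\}$ and check that, for each fixed $j$ with $n\ge h_j$, the condition $\binom{n}{2}-k-i\in\left[b_j,\ b_j+m_j+\binom{n}{2}-\binom{h_j}{2}\right]$ is equivalent to $i\in\left[a_j,\ a_j+m_j+\binom{n}{2}-\binom{h_j}{2}\right]$. This is where~\eqref{eq:LG-hyp} enters: $\binom{n}{2}-k-i\ge b_j$ rearranges to $i\le \binom{n}{2}-k-b_j$, and using $b_j=\binom{h_j}{2}-a_j-m_j-k$ this becomes $i\le a_j+m_j+\binom{n}{2}-\binom{h_j}{2}$, matching the upper endpoint of the $F$-interval. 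Likewise $\binom{n}{2}-k-i\le b_j+m_j+\binom{n}{2}-\binom{h_j}{2}$ rearranges, via the same substitution, to $i\ge a_j$, matching the lower endpoint. So the $j$th interval in the $G$-union pulls back under the affine map $i\mapsto\binom{n}{2}-k-i$ to exactly the $j$th interval in the $F$-union. Since the constraint $n\ge h_j$ governing which $j$ appear is identical on both sides, taking the union over $j$ yields $\{i:F(i)\le n\}=\{i:\binom{n}{2}-k-i\in\{i':G(i')\le n\}\}$, which is precisely~\eqref{eq:opposite-fns}.

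One subtlety I would be careful about is the domain issue flagged in the text after~\eqref{eq:opposite-fns}: the statement ``$G(\binom{n}{2}-k-i)\le n$'' silently includes the requirement that $\binom{n}{2}-k-i$ lie in the domain of $G$. Since $G$ is a pointwise minimum of \emph{infinite} staircases, its domain is $\bigcup_{0<j<N}\Z_{\ge b_j}$, and membership in $\{i':G(i')\le n\}$ already forces $i'$ to be in some $[b_j,\dots]$ and hence in the domain; so the interval description from Lemma~\ref{lem:min-stair} automatically encodes the domain restriction and no separate bookkeeping is needed. (If one wanted the finite-staircase version later, this is exactly where extra care would be required, but for this lemma the infinite hypothesis makes it clean.)

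The calculation itself is entirely routine affine-inequality manipulation; there is no real obstacle, and the only thing to get right is the re-indexing $j\leftrightarrow N-j$ that aligns the $F_j$ and $G_j$ families and the sign flip in the substitution $i=\binom{n}{2}-k-j$. The conceptual point — and the reason this lemma is the payoff of the preceding three subsections — is that Lemma~\ref{lem:min-stair} has already done all the combinatorial work of turning staircase minima into unions of intervals, so that~\eqref{eq:LG-hyp} is precisely the condition making each interval on one side the affine reflection of the corresponding interval on the other. I would close by remarking that~\eqref{eq:LG-hyp} is the ``local opposite'' balance condition: $a_j$ and $b_j$ are the dinv-offsets of paired local chains, $m_j$ encodes the first-return data, and $h_j$ the minimum triangle size, so the equation $a_j+b_j+m_j+k=\binom{h_j}{2}$ is the arithmetic shadow of the geometric duality between a local chain and its opposite.
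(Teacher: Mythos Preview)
Your proposal is correct and follows essentially the same approach as the paper: both apply Lemma~\ref{lem:min-stair} to describe the sublevel sets of $F$ and $G$ as unions of intervals, re-index $j\leftrightarrow N-j$ on the $G$ side, and then use hypothesis~\eqref{eq:LG-hyp} to verify that the affine map $i\mapsto\binom{n}{2}-k-i$ carries the $j$th $F$-interval onto the $j$th $G$-interval. The only cosmetic difference is that the paper first rewrites the upper endpoint of the $F$-interval as $\binom{n}{2}-b_j-k$ via~\eqref{eq:LG-hyp} and then matches, whereas you keep both intervals in their raw staircase form and do the substitution at the end; the content is identical.
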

\begin{proof}
Fix integers $n$ and $i$. By Lemma~\ref{lem:min-stair}
and the assumption $a_j+m_j-\binom{h_j}{2}=-b_j-k$,
\[ F(i)\leq n \Leftrightarrow
 i \in \bigcup_{j:\ n\geq h_j} 
       \left[a_j,\binom{n}{2}-b_j-k\right]
 \Leftrightarrow \exists j,n\geq h_j\mbox{ and }
  a_j\leq i\leq \binom{n}{2}-b_j-k. \]
Similarly, applying Lemma~\ref{lem:min-stair} to $G$ 
and replacing $j$ by $N-j$, we get:
\begin{align*}
G\left(\binom{n}{2}-k-i\right)\leq n &\Leftrightarrow
 \binom{n}{2}-k-i \in \bigcup_{j:\ n\geq h_{N-j}} 
       \left[b_{N-j},b_{N-j}+m_{N-j}+\binom{n}{2}-\binom{h_{N-j}}{2}\right] \\
& \Leftrightarrow  \binom{n}{2}-k-i \in \bigcup_{j:\ n\geq h_j} 
       \left[b_j,b_j+m_{j}+\binom{n}{2}-\binom{h_{j}}{2}\right] \\
&\Leftrightarrow  \exists j,n\geq h_j\mbox{ and }
  b_j\leq \binom{n}{2}-k-i \leq \binom{n}{2}-k-a_j \\
&\Leftrightarrow \exists j,n\geq h_j\mbox{ and }
  a_j\leq i\leq \binom{n}{2}-b_j-k
 \Leftrightarrow F(i)\leq n.  \qedhere
\end{align*}
\end{proof}

The next two lemmas consider a special situation where we can
conclude $F_1\geq F_2\geq\cdots\geq F_{N-1}$; this situation will
arise in our study of local chains.

\begin{lemma}\label{lem:compare2}
Suppose $F$ is the infinite $(a,m,h)$-staircase,
        $G$ is the infinite $(a',m',h')$-staircase, 
$a+m<a'$, and $G(a')<F(a')$. Then for all $i\geq a'$, $G(i)\leq F(i)$.
\end{lemma}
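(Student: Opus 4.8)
The statement compares two infinite staircases: $F$ is the $(a,m,h)$-staircase and $G$ is the $(a',m',h')$-staircase, with $a+m<a'$ and $G(a')<F(a')$. Since the domain of $G$ is $\Z_{\geq a'}$ and $a+m<a'$, every $i$ in the conclusion lies past the end of the ``flat part'' of $F$, so on this range $F$ is in its strictly increasing regime. The goal is to show $G(i)\leq F(i)$ for all $i\geq a'$. The natural approach is a bookkeeping argument using the explicit value formulas~\eqref{eq:stair-val} for both staircases, comparing the level sets $\{i:F(i)\leq n\}$ and $\{i:G(i)\leq n\}$; it suffices to show that for every $n$, $\{i\geq a':G(i)\leq n\}\supseteq\{i\geq a':F(i)\leq n\}$.

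\textbf{Key steps.} First I would record that the hypothesis $G(a')<F(a')$ together with $a+m<a'$ forces $h'<F(a')$, and in fact $h'\leq F(a')-1$; since $F(a')$ is the value of $F$ at the first index past its flat stretch, $F(a')=h+1$ if $a'=a+m+1$ and in general $F(a')$ is whatever plateau level of $F$ contains index $a'$. The point is that $G$ starts (at index $a'$) with value $h'$, which is strictly below the value $F$ has already reached at that same index. Second, I would invoke~\eqref{eq:stair-val}: for $n>h$, $F(i)\leq n$ iff $i\leq a+m+\binom{n}{2}-\binom{h}{2}$, and for $n>h'$, $G(i)\leq n$ iff $i\leq a'+m'+\binom{n}{2}-\binom{h'}{2}$. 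So the desired inequality $G(i)\leq F(i)$ for all $i\geq a'$ is equivalent to: for every relevant $n$, the right endpoint for $G$ is at least the right endpoint for $F$, i.e.
\[ a'+m'+\binom{n}{2}-\binom{h'}{2}\ \geq\ a+m+\binom{n}{2}-\binom{h}{2}, \]
which simplifies to $a'+m'-\binom{h'}{2}\geq a+m-\binom{h}{2}$, a single inequality independent of $n$. Third, I would derive this single inequality from the hypotheses. The key is that $G(a')=h'$ while $F(a')>h'$ means $a'$ lies strictly to the right of the block of $F$ with value $h'$ (if any), hence $a'>a+m+\binom{h'}{2}-\binom{h}{2}$ when $h'>h$ (using~\eqref{eq:stair-val} with $n=h'$), i.e. $a'-\binom{h'}{2}>a+m-\binom{h}{2}$; adding the nonnegative $m'$ gives exactly the needed inequality. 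The edge case $h'\leq h$ should be handled separately (it cannot happen, or if it can, the inequality $a+m<a'$ already suffices since then $-\binom{h'}{2}\geq -\binom{h}{2}$).

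\textbf{Main obstacle.} The bulk of the work is the careful translation of the condition ``$G(a')<F(a')$'' into the clean inequality $a'-\binom{h'}{2}>a+m-\binom{h}{2}$ (or its $\geq$ variant), because this requires knowing precisely which plateau of $F$ the index $a'$ falls into, and arguing that $a'$ sits strictly beyond the $h'$-valued block of $F$. Once that translation is done, everything reduces to the single arithmetic inequality above, which is then uniform in $n$ and immediate. I expect the only real subtlety is correctly handling the boundary behavior of the staircase (the ``$m+1$ copies of $h$'' convention, and whether various blocks are empty), which is routine but must be done with care; the conceptual content is entirely captured by the reduction to $a'+m'-\binom{h'}{2}\geq a+m-\binom{h}{2}$.
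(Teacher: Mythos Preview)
Your approach is correct but genuinely different from the paper's. You reduce the comparison to the single arithmetic inequality $a'+m'-\binom{h'}{2}\geq a+m-\binom{h}{2}$ via level sets and~\eqref{eq:stair-val}, then extract this inequality from the hypothesis $F(a')>h'$ (splitting on whether $h'>h$ or $h'\leq h$; note the latter case genuinely occurs, e.g.\ when $a'=a+m+1$, so your hedge ``or if it can'' is needed and your argument there is fine). The paper instead argues structurally: setting $y=F(a')$, it lets $i_1$ and $i_2$ be the first indices where $F$ and $G$ (respectively) attain the value $y$, observes $i_1\leq a'<i_2$, and then uses the fact that from those points onward the two staircases have \emph{identical} step patterns ($y-1$ copies of $y$, then $y$ copies of $y+1$, etc.), so $G$ is just a rightward shift of $F$ by $i_2-i_1>0$ on $[i_2,\infty)$, giving $G(i)=F(i-(i_2-i_1))\leq F(i)$; the interval $[a',i_2)$ is handled by $G(i)\leq y\leq F(i)$. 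Your route is more computational and leans on the explicit formula, buying a clean ``one inequality to check''; the paper's route is shorter and more conceptual, exploiting directly that two staircases synchronize once they reach a common value.
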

\begin{proof}
We know $F$ and $G$ are weakly increasing sequences whose values
increase by $0$ or $1$ at each step.
Let $y=F(a')$, let $i_1$ be the least integer with $F(i_1)=y$,
and let $i_2$ be the least integer with $G(i_2)=y$. 
Since $a+m<a'$, we know $F(a)=\cdots=F(a+m)<F(a+m+1)\leq y$,
so $a+m<i_1\leq a'$.  Since $G(a')<y$, the definition of a staircase 
sequence shows that $i_2$ exists and $a'<i_2$.
The values of the sequence $F$, from input $i_1$ onward,
 are $y-1$ copies of $y$, then $y$ copies of $y+1$, and so on.
The values of the sequence $G$, from input $i_2$ onward,
 are $y-1$ copies of $y$, then $y$ copies of $y+1$, and so on.
If $i_2\leq i$, then these remarks show that
\[ G(i)=F(i_1+(i-i_2))=F(i+(i_1-i_2))\leq F(i), \]
since $i_1-i_2<0$. If $a'\leq i<i_2$, 
then $G(i)\leq G(i_2)=y=F(i_1)\leq F(a')\leq F(i)$.
\end{proof}

\begin{lemma}\label{lem:dec-min}
Suppose $F_j$ is the infinite $(a_j,m_j,h_j)$-staircase for $1\leq j\leq c$,
$a_{j-1}+m_{j-1}<a_j$ and $F_j(a_j)<F_{j-1}(a_j)$ for $1<j\leq c$, and
$F=\min_{1\leq j\leq c}(F_j)$. Let $a_{c+1}=\infty$. If $a_j\leq i<a_{j+1}$, 
then 
\begin{equation}\label{eq:dec-min}
 F(i)=\min\{ F_1(i),F_2(i),\ldots,F_j(i) \}=F_j(i).
\end{equation}
\end{lemma}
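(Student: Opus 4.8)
\textbf{Proof plan for Lemma~\ref{lem:dec-min}.}
The plan is to prove both equalities in~\eqref{eq:dec-min} by induction on $j$, using Lemma~\ref{lem:compare2} as the workhorse. The key observation is that the hypotheses $a_{j-1}+m_{j-1}<a_j$ and $F_j(a_j)<F_{j-1}(a_j)$ are exactly the hypotheses of Lemma~\ref{lem:compare2} applied to the pair $(F_{j-1},F_j)$, so that lemma gives $F_j(i)\leq F_{j-1}(i)$ for all $i\geq a_j$. First I would establish, by a downward chain of such comparisons, that for any fixed $j$ we have $F_j(i)\leq F_{j-1}(i)\leq\cdots\leq F_1(i)$ whenever $i\geq a_j$; this requires noting that $a_1\leq a_2\leq\cdots\leq a_c$ (which follows since $a_{l-1}\leq a_{l-1}+m_{l-1}<a_l$), so that $i\geq a_j$ implies $i\geq a_l$ for all $l\leq j$, and hence every comparison $F_l(i)\leq F_{l-1}(i)$ in the chain is valid.

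Given this, the argument is short. Suppose $a_j\leq i<a_{j+1}$. Then $i<a_{j+1}$ means $i$ is not in the domain-relevant range forcing us to consider $F_{j+1},\ldots,F_c$; more precisely, for $l>j$ we have $i<a_{j+1}\leq a_l$, so $i$ lies below the start of the staircase $F_l$ and thus $F_l(i)$ does not enter the pointwise minimum over the full index set at the point $i$ --- or, if one prefers to keep all sequences defined on all of $\Z_{\geq a_l}$, one simply observes that the minimum in~\eqref{eq:dec-min} is taken only over $j'\leq j$ by fiat, matching the statement. (I would state explicitly at the start which convention on domains is in force, consistent with the pointwise-minimum definition in~\S\ref{subsec:min-of-stair}: $F(i)$ is the least value among those $F_l(i)$ with $i$ in the domain of $F_l$.) Since $i\geq a_j\geq a_l$ for all $l\leq j$, all of $F_1(i),\ldots,F_j(i)$ are defined, and by the chain of inequalities from the previous paragraph, $F_j(i)=\min\{F_1(i),\ldots,F_j(i)\}$. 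Combined with the domain observation that no $F_l$ with $l>j$ contributes at this $i$, we get $F(i)=\min\{F_1(i),\ldots,F_j(i)\}=F_j(i)$, which is~\eqref{eq:dec-min}.

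The one point needing a little care --- and the likely main obstacle --- is the treatment of the last block, $j=c$, where $a_{c+1}=\infty$ by convention, so the condition $a_c\leq i<a_{c+1}$ just says $i\geq a_c$; here there is no ``$l>j$'' term to dispose of, and the argument reduces purely to the chain of comparisons, which is fine. The other subtlety is making sure Lemma~\ref{lem:compare2} really applies to each consecutive pair without any hidden requirement that $h_j$ and $h_{j-1}$ be comparable: inspecting that lemma, its only hypotheses are $a_{j-1}+m_{j-1}<a_j$ and $G(a_j)<F(a_j)$ (with $G=F_j$, $F=F_{j-1}$), both of which are granted, so there is no gap. I would write out the downward comparison chain once as a displayed inequality and then invoke it; the rest is bookkeeping on the intervals $[a_j,a_{j+1})$.
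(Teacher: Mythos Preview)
Your proposal is correct and follows essentially the same approach as the paper: both arguments deduce the first equality from the domain constraint (for $l>j$ one has $i<a_{j+1}\leq a_l$, so $F_l(i)$ is undefined), and both obtain the second equality by applying Lemma~\ref{lem:compare2} to each consecutive pair $(F_{l-1},F_l)$ and chaining the resulting inequalities $F_l(i)\leq F_{l-1}(i)$ for $i\geq a_l$. The paper packages the chain as an induction on $j$ proving the slightly stronger statement ``for all $p<j$ and all $i\geq a_j$, $F_j(i)\leq F_p(i)$,'' but this is exactly your downward chain written inductively.
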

\begin{proof} 
The domain of $F_j$ is $\Z_{\geq a_j}$. If $a_j\leq i<a_{j+1}$,
then $i$ is in the domain of $F_1,F_2,\ldots,F_j$ but not
in the domain of $F_{j+1},\ldots,F_c$. So the first equality 
in~\eqref{eq:dec-min} follows from the definition of pointwise minimum.
To get the second equality, we prove the following stronger statement
by induction on $j$: for all $p<j$ and all $i\geq a_j$, $F_j(i)\leq F_p(i)$.
Fix $j>1$, and assume that for all $p<j-1$ and all $i\geq a_{j-1}$,
$F_{j-1}(i)\leq F_p(i)$. Fix $i\geq a_j$. Since $a_{j-1}+m_{j-1}<a_j$
and $F_j(a_j)<F_{j-1}(a_j)$, Lemma~\ref{lem:compare2} shows that
$F_j(i)\leq F_{j-1}(i)$. Combining this with the induction hypothesis,
we see that $F_j(i)\leq F_p(i)$ for all $p<j$, as needed.  
\end{proof}

\subsection{Ordinary Local Chains}
\label{subsec:ord-local-chain}

We are now ready to define local chains.
A sequence of partitions $\calS$ is called an
\emph{ordinary local chain of deficit $k$} iff there exist nonnegative 
integers $a,a',m,m',h,h'$ satisfying the following conditions.
First, $\calS=(\gamma(i): a\leq i\leq a'+m')$ where 
$\defc(\gamma(i))=k$ and $\dinv(\gamma(i))=i$ for $a\leq i\leq a'+m'$.
Second, the $\mind$-sequence $F=(\mind(\gamma(i)): a\leq i\leq a'+m')$ 
associated with $\calS$ satisfies 
\[ F(a'-1)>F(a')=F(a'+1)=\cdots=F(a'+m')=h'. \]
Third, $a+m+1<a'$ and the restriction of $F$ to $\{a,a+1,\ldots,a'-1\}$
is an $(a,m,h)$-staircase, so in particular
\[ h=F(a)=F(a+1)=\cdots=F(a+m)<F(a+m+1). \]

Since $F(a+m)<F(a+m+1)$ and $F(a'-1)>F(a')$, 
the integers $a,a',m,m',h,h'$ are uniquely determined by $\calS$,
and we denote them $a_{\calS}$, $a'_{\calS}$, $m_{\calS}$, $m'_{\calS}$,
$h_{\calS}$, $h'_{\calS}$ (respectively). We also define 
the \emph{left part}, \emph{middle part}, and \emph{right part} of $\calS$ 
to be
\begin{align*}
\lpart(\calS) &= \{\gamma(j):a\leq j\leq a+m\}; \\
\mpart(\calS) &= \{\gamma(j): a+m<j<a'\};  \\
\rpart(\calS) &= \{\gamma(j): a'\leq j\leq a'+m'\}.
\end{align*}

It is helpful to visualize the conditions on the $\mind$-sequence
of a local chain $\calS$ by graphing the set of ordered pairs
$(\dinv(\gamma),\mind(\gamma):\gamma\in\calS)$ in the $xy$-plane.
See Figure~\ref{fig:lc-graph} for an illustration of the structure
of an ordinary local chain.

\begin{figure}[h]
\begin{center}
\epsfig{file=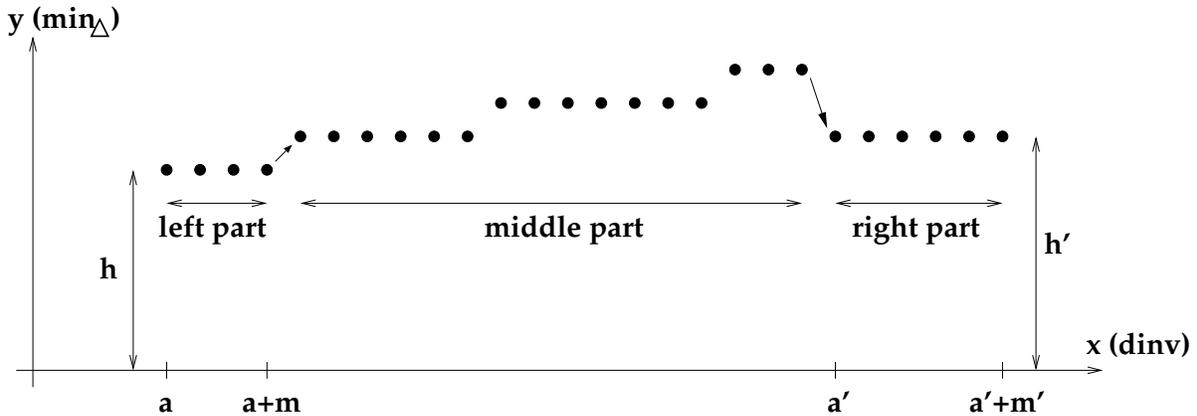,scale=0.7}
\end{center}
\caption{Structure of an ordinary local chain.}
\label{fig:lc-graph}
\end{figure}

\begin{example}\label{ex:ord-local-chain}
Here are two ordinary local chains of deficit $7$:
\begin{align*}
 \calS &= ((3333),(52222),(641111),(753),(44422),(633311)); \\
 \calT &= ((32222),(621111),(751),(43331),(63222),(652111)).
\end{align*}
The following two-line arrays show the values of $\dinv$ and $\mind$
for objects in the sequences $\calS$ and $\calT$:
\[ F_{\calS} = \left[\begin{array}{rrrrrrrr}
 \dinv: & 5 & 6 & 7 & 8 & 9 & 10 \\
 \mind: & 7 & 7 & 7 & 8 & 7 & 7 \end{array}\right]; \qquad
 F_{\calT} = \left[\begin{array}{rrrrrrrr}
 \dinv: & 4 & 5 & 6 & 7 & 8 & 9 \\
 \mind: & 7 & 7 & 8 & 7 & 7 & 7 \end{array}\right].  \]
We have 
\[ a_{\calS}=5,\ m_{\calS}=2,\ h_{\calS}=7,\ 
a'_{\calS}=9,\ m'_{\calS}=1,\ h'_{\calS}=7; \]
\[ a_{\calT}=4,\ m_{\calT}=1,\ h_{\calT}=7,\ 
a'_{\calT}=7,\ m'_{\calT}=2,\ h'_{\calT}=7.  \] 
The left, middle, and right parts of $\calS$ have size $m_{\calS}+1=3$, 
$1$, and $m'_{\calS}+1=2$, respectively.  
\end{example}

Suppose $\calS$ and $\calT$ are ordinary local chains of deficit $k$.
We say that $\calS$ is \emph{locally opposite} to $\calT$ iff
$m_{\calT}=m'_{\calS}$, $m'_{\calT}=m_{\calS}$, 
$h_{\calT}=h'_{\calS}$, $h'_{\calT}=h_{\calS}$, 
\begin{equation}\label{eq:loc-opp}
 a_{\calS}+m_{\calS}+k+a'_{\calT}=\binom{h_{\calS}}{2},\quad\mbox{ and }\quad
 a'_{\calS}+m'_{\calS}+k+a_{\calT}=\binom{h'_{\calS}}{2}. 
\end{equation}
For instance, the local chains $\calS$ and $\calT$ in 
Example~\ref{ex:ord-local-chain} are locally opposite because
$m_{\calT}=1=m'_{\calS}$, $m'_{\calT}=2=m_{\calS}$,
$h_{\calT}=7=h'_{\calS}$, $h'_{\calT}=7=h_{\calS}$, and
(recalling $k=7$)
\[ 5+2+7+7=21=\binom{7}{2},\ \  9+1+7+4=21=\binom{7}{2}. \]

\subsection{Exceptional Local Chains}
\label{subsec:exc-local-chain}

We also need two types of \emph{exceptional local chains of deficit $k$}.
First, any one-element set $\calT=\{\gamma\}$ with $\defc(\gamma)=k$
is an exceptional local chain, and we define
\[ a'_{\calT}=\dinv(\gamma),\ m'_{\calT}=0,\ h'_{\calT}=\mind(\gamma),\]
\[ \lpart(\calT)=\emptyset,\ \mpart(\calT)=\emptyset,\mbox{ and }
   \rpart(\calT)=\{\gamma\}. \]
Second, for any $\mu\in\Par(k)$, the $\nu$-tail 
$\calS=\tail(\mu)=\nu^*(\TI(\mu))$ is an exceptional local chain,
and we define
\[ a_{\calS}=\dinv(\TI(\mu)),\ m_{\calS}=0,\ h_{\calS}=\mind(\TI(\mu)), \]
\[ \lpart(\calS)=\{\TI(\mu)\},\  \mpart(\calS)=\calS\setminus\{\TI(\mu)\},
 \mbox{ and }\rpart(\calS)=\emptyset. \]
By Proposition~\ref{prop:nu-stair} applied to $\gamma=\TI(\mu)$, 
the $\mind$-sequence associated with $\calS=\tail(\mu)$ is the 
infinite $(a_{\calS},m_{\calS},h_{\calS})$-staircase.

Two exceptional local chains of deficit $k$ are \emph{locally opposite} iff
one chain is $\calS=\tail(\mu)$ and the other chain is $\calT=\{\gamma\}$
where $\dinv(\gamma)=\ell(\mu)$ and $\mind(\gamma)=\mind(\TI(\mu))$.
Since we know $|\mu|=k$, $\mind(\TI(\mu))=\mu_1+\ell(\mu)+1$,
and $\dinv(\TI(\mu))=\binom{\mu_1+\ell(\mu)+1}{2}-\ell(\mu)-|\mu|$
(see~\S\ref{subsec:map-nu}), it follows that $m'_{\calT}=0=m_{\calS}$,
$h'_{\calT}=\mind(\gamma)=h_{\calS}$, and the 
first equation in~\eqref{eq:loc-opp} holds.
It follows from these definitions that the relation
``$\calS$ is locally opposite to $\calT$'' is a symmetric
relation on the set of all (ordinary and exceptional)
local chains of deficit $k$.

\begin{example}\label{ex:exc-local-chain}
Here are two exceptional local chains of deficit $6$:
\[ \calS=\tail(411)=\nu^*(6654211)=((6654211),(855431),(774432),\ldots);
\quad \calT=\{(3111111)\}. \]
Writing $\mu=(411)$, $\TI(\mu)=(6654211)$, and $\gamma=(3111111)$, we compute
$\dinv(\gamma)=3=\ell(\mu)$ and $\mind(\gamma)=8=\mind(\TI(\mu))$. 
So $\calS$ and $\calT$ are locally opposite chains. Note that 
\[ a_{\calS}=\dinv(\TI(\mu))=19,\ m_{\calS}=0,\ h_{\calS}=8,\ 
   a'_{\calT}=3, m'_{\calT}=0, h'_{\calT}=8, \]
\[\mbox{ and }a_{\calS}+m_{\calS}+k+a'_{\calT}=19+0+6+3=28
=\binom{h_{\calS}}{2}. \] 
\end{example}

\subsection{The Local Chain Conjecture}
\label{subsec:local-chain-conj}

We can now state our main structural conjecture on local chains.

\begin{conjecture}\label{conj:local}
For every $k\geq 0$, there is a set $\LL$ of local chains of deficit $k$,
and there is an involution $\calS\mapsto\calS^*$ on $\LL$,
satisfying the following conditions:
\begin{enumerate} 
\item[(a)] For every $\mu\in\Par(k)$, $\tail(\mu)$ belongs to $\LL$.
\item[(b)] For any two distinct chains in $\LL$, either the two chains
 are disjoint or the right part of one chain equals the left part
 of the other chain.
\item[(c)] Every $\gamma$ in $\Def(k)$ belongs to exactly one or two
 local chains in $\LL$. In the former case, $\gamma$ belongs to the
 middle part of the chain. In the latter case, $\gamma$ belongs to 
 the right part of one chain and the left part of the other.
\item[(d)] For all $\calS$ in $\LL$, the local chains 
 $\calS$ and $\calS^*$ are locally opposite. 
\item[(e)] For all $\calS,\calT\in\LL$, if $\calS$ and $\calT$ have
 nonempty intersection, then $\calS^*$ and $\calT^*$ have nonempty
 intersection.  
\item[(f)] For all $\calS\in\LL$, $\lpart(\calS)\cup \mpart(\calS)$
is a union of $\nu$-segments.
\end{enumerate} 
\end{conjecture}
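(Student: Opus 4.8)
The plan is to prove Conjecture~\ref{conj:local} by explicit construction: for each $k$ (carried out completely for $k\le\maxk$) I will exhibit a concrete finite family $\LL=\LL_k$ of local chains together with the involution $\calS\mapsto\calS^*$, and then check (a)--(f) one at a time. The construction has three stages. First, the $\nu$-tails $\tail(\mu)$ with $\mu\in\Par(k)$ are forced into $\LL_k$ by (a); each is recorded as an exceptional local chain via~\S\ref{subsec:exc-local-chain}, and by Proposition~\ref{prop:nu-stair} its $\mind$-sequence is the infinite $(\dinv(\TI(\mu)),0,\mind(\TI(\mu)))$-staircase. Second, I partition $\Def(k)$ into $|\Par(k)|$ ``strands,'' one per $\mu$, each consisting of $\tail(\mu)$ together with a finite collection of \emph{finite} $\nu$-segments of $\Def(k)$ chosen so that the strand, listed in order of increasing dinv, is an infinite sequence $C_{\mu}(\ell(\mu^*)),C_{\mu}(\ell(\mu^*)+1),\ldots$ of deficit-$k$ partitions whose $\mind$-sequence consists of flat ``staircase runs'' separated by single downward drops to short flat ``valleys.'' By Proposition~\ref{prop:nu-stair} the staircase shape of each run is automatic along a $\nu$-segment, so the only genuine decisions are which $\nu$-segments to juxtapose and where the valleys fall. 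Third, I slice each strand along its valleys into overlapping local chains: a one-element exceptional chain $\{C_{\mu}(\ell(\mu^*))\}$ at the low-dinv end, a finite string of ordinary local chains in the middle (each valley being simultaneously the right part of one ordinary chain and the left part of the next), and $\tail(\mu)$ at the high-dinv end.

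Define $\calS\mapsto\calS^*$ on $\LL_k$ by pairing $\tail(\mu)$ with the one-element chain $\{C_{\mu^*}(\ell(\mu))\}$, and pairing the $i$-th ordinary piece of the $\mu$-strand (counted from the low-dinv end) with the $i$-th ordinary piece of the $\mu^*$-strand counted from the high-dinv end, mirroring the reflection $\dinv\leftrightarrow\area_n$. Conditions (a)--(c) and (f) are then verified as follows. Part (a) is immediate. For (b) and (c): chains coming from different $\mu$ are disjoint because the strands partition $\Def(k)$, and inside one strand the slicing places each $\gamma$ either in the middle part of a single piece (if $\gamma$ lies in a staircase run) or in two consecutive pieces, the right part of one and the left part of the next (if $\gamma$ lies in a valley); this is exactly the ``staircase, then drop, then staircase'' shape required of a local chain's $\mind$-sequence in~\S\ref{subsec:ord-local-chain}, which is what Stage two arranges. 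Part (f) is built in: the left-plus-middle part of each ordinary piece is, by construction, the union of the $\nu$-segments juxtaposed to form that staircase run, consistent with Proposition~\ref{prop:nu-stair}, while each valley ``right part'' is merely re-used as the next piece's left part.

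The crux is (d) together with (e). For (d) on exceptional pairs, the parameter matching and the identity $a_{\calS}+m_{\calS}+k+a'_{\calT}=\binom{h_{\calS}}{2}$ were already checked in~\S\ref{subsec:exc-local-chain} from the known values of $\dinv(\TI(\mu))$ and $\mind(\TI(\mu))$. For (d) on ordinary pairs one must show that the parameters $m,m',h,h'$ of the $i$-th piece of the $\mu$-strand and of the matched piece of the $\mu^*$-strand agree as required, and that the two identities of~\eqref{eq:loc-opp} hold exactly; these are precisely the constraints making Lemma~\ref{lem:LG} applicable, so that the pointwise-minimum-of-staircases description of $\mind$ forces $\Cat_{n,\calS}(q,t)=\Cat_{n,\calS^*}(t,q)$. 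Hence the valley heights and flat-run lengths of the $\mu$-strand must be the exact reverse of those of the $\mu^*$-strand, with the arithmetic of~\eqref{eq:loc-opp} holding on the nose — this is the property the choices in Stage two must be engineered to satisfy, and for $k\le\maxk$ it is a finite check. For (e): two chains in $\LL_k$ overlap only when they are consecutive pieces of a common strand, and $*$ sends them to the correspondingly consecutive (reversed) pieces of the opposite strand, which again overlap. I expect the main obstacle to be Stage two: locating, for every $\mu$, a family of finite $\nu$-segments of $\Def(k)$ that simultaneously tiles the required dinv-window of the strand, yields a $\mind$-sequence of the exact staircase/valley form, and has valley data reverse to that of the $\mu^*$-strand with~\eqref{eq:loc-opp} satisfied. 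This is the heart of the systematic chain-building operations; here it is accomplished by direct construction for all $\mu$ with $|\mu|\le\maxk$, and Conjecture~\ref{conj:local} in full generality is precisely the assertion that such a construction always exists.
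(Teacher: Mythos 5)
Your proposal follows essentially the same route as the paper: build each global chain explicitly as a concatenation of $\nu$-segments, slice it into overlapping local chains at the descents of its $\mind$-sequence, pair the $i$-th piece of the $\mu$-strand with the reversed $i$-th piece of the $\mu^*$-strand, and reduce everything to the finite arithmetic checks of~\eqref{eq:loc-opp} together with the staircase condition on each left-plus-middle part — exactly the procedure of Sections~\ref{subsec:global-to-local}--\ref{subsec:verify-local-opp}. The only content you defer (as does any outline) is the explicit chain data itself, which the paper supplies in its appendix for $7\leq k\leq \maxk$; your description of what must be checked about that data is accurate.
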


Aided by computer searches, we can explicitly construct local chains
proving this conjecture for all $k\leq \maxk$. 
The details appear in Section~\ref{sec:specific-chains}.
First we prove that this conjecture for local chains implies
the corresponding conjecture for global chains.

\begin{theorem}\label{thm:LG}
Conjecture~\ref{conj:local} implies Conjecture~\ref{conj:global}.
\end{theorem}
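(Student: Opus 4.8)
The plan is to construct the global chains $\C_\mu$ out of the local chains in $\LL$ by ``gluing'' along the matching of right parts with left parts, and then verify conditions (a)--(g) of Conjecture~\ref{conj:global} one at a time, with condition (g) --- the global opposite property --- being established via Lemma~\ref{lem:LG}.

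\medskip

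\noindent\emph{Building the chains.} First I would exploit Conjecture~\ref{conj:local}(b)--(c) to organize the local chains in $\LL$ into maximal ``trains.'' Define a graph on $\LL$ whose edges join $\calS$ to $\calT$ when $\rpart(\calS)=\lpart(\calT)$; by (b) each local chain has at most one right-neighbor and at most one left-neighbor, so the connected components are simple paths (or, allowing for the $\nu$-tail which has $\rpart=\emptyset$ and the one-element exceptional chains which have $\lpart=\emptyset$, these paths are finite and terminate appropriately). Each such path $\calS_1,\calS_2,\ldots,\calS_r$ should, by Conjecture~\ref{conj:local}(a) and the structure of $\tail(\mu)$, have $\calS_r=\tail(\mu)$ for a \emph{unique} $\mu\in\Par(k)$ --- this uses that $\mu$ is recoverable from $\tail(\mu)$ (noted in \S\ref{subsec:map-nu}) and that exactly one chain per train is a $\nu$-tail. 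I then set $\C_\mu$ to be the union of the objects in $\calS_1\cup\cdots\cup\calS_r$, with each overlap $\rpart(\calS_i)=\lpart(\calS_{i+1})$ counted once. Condition (c) of Conjecture~\ref{conj:local} guarantees that the $\C_\mu$ partition $\Def(k)$: every $\gamma\in\Def(k)$ lies in one train, either in a middle part (one chain) or straddling a right/left junction (two consecutive chains of the same train). That gives Conjecture~\ref{conj:global}(a), (b), (d), and (e).

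\medskip

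\noindent\emph{Indexing, the $\nu$-tail, and closure.} For Conjecture~\ref{conj:global}(c) and (f) I would check that consecutive objects in $\C_\mu$ have consecutive dinv: within each local chain $\dinv(\gamma(i))=i$ by definition, and at a junction the shared segment forces agreement, so $\C_\mu=\{C_\mu(a),C_\mu(a+1),\ldots\}$ with $\dinv(C_\mu(i))=i$; the starting index $a$ is $a_{\calS_1}$. Using Conjecture~\ref{conj:local}(d) and the defining equations~\eqref{eq:loc-opp} I can identify this starting index with $\ell(\mu^*)$: the leftmost local chain $\calS_1$ is locally opposite to $\calS_1^*$, which is the one-element exceptional chain $\{\gamma^*\}$ with $\dinv(\gamma^*)=\ell(\mu^*)$, and by definition of local oppositeness for exceptional chains this pins $a_{\calS_1}$. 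Here $\mu^*$ is defined by: $\calS_r=\tail(\mu)$, its opposite $\calS_r^*=\{\gamma\}$ with $\dinv(\gamma)=\ell(\mu)$ sits at the left end of the train containing $\calS_r^*$, and that train's $\nu$-tail is $\tail(\mu^*)$; involutivity of $\mu\mapsto\mu^*$ follows from involutivity of $\calS\mapsto\calS^*$ together with condition (e), which ensures the $*$-operation respects trains. Condition (e) of Conjecture~\ref{conj:global}, $\tail(\mu)\subseteq\C_\mu$, is immediate since $\tail(\mu)=\calS_r$. For (f), closure under $\nu$: by Conjecture~\ref{conj:local}(f), each $\lpart(\calS_i)\cup\mpart(\calS_i)$ is a union of $\nu$-segments, and $\rpart(\calS_i)=\lpart(\calS_{i+1})$ is the start of the next piece, so $\C_\mu$ is a union of $\nu$-segments, hence closed under $\nu$ wherever $\nu$ is defined.

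\medskip

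\noindent\emph{The opposite property (g), the main obstacle.} This is where the abstract machinery of \S\S\ref{subsec:staircase-seq}--\ref{subsec:opp-prop-lemma} pays off. By \S\ref{subsec:opp-mind}, it suffices to show the $\mind$-sequence $F$ of $\C_\mu$ and the $\mind$-sequence $G$ of $\C_{\mu^*}$ satisfy the opposite property~\eqref{eq:opposite-fns} for deficit $k$. The key claim is that $F=\min(F_1,\ldots,F_{N-1})$ where $F_j$ is the infinite $(a_{\calS_j},m_{\calS_j},h_{\calS_j})$-staircase extending the staircase portion of the local chain $\calS_j$, and likewise $G=\min(G_1,\ldots,G_{N-1})$ with $G_j$ the infinite $(a_{\calS_{N-j}^*},m_{\calS_{N-j}^*},h_{\calS_{N-j}^*})$-staircase, after reindexing so that $\calS_j^* = \calT_{N-j}$ for the opposite train $\calT_1,\ldots,\calT_{N-1}$. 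Proving this ``$F$ equals the pointwise minimum of the extended staircases'' identity is the heart of the matter: on the stretch $a_{\calS_j}\le i< a_{\calS_{j+1}}$ one needs the true $\mind$-value to coincide with $\min\{F_1(i),\ldots,F_j(i)\}$, and the structure of an ordinary local chain (the $\mind$-sequence dips to $h'$ on the right part, which is the left part $h$ of the next chain, so the staircases are strictly decreasing at the junctions) is exactly the hypothesis $a_{j-1}+m_{j-1}<a_j$ and $F_j(a_j)<F_{j-1}(a_j)$ of Lemma~\ref{lem:dec-min}. Applying Lemma~\ref{lem:dec-min} shows $F$ agrees on each stretch with the minimum, and for $i$ to the right of the last junction the infinite $\nu$-tail staircase (Proposition~\ref{prop:nu-stair}) controls $F$. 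Once the two pointwise-minimum descriptions are in hand, the local oppositeness conditions~\eqref{eq:loc-opp}, matched up across $j\leftrightarrow N-j$, are precisely the hypothesis~\eqref{eq:LG-hyp} of Lemma~\ref{lem:LG} with $a_j=a_{\calS_j}$, $b_j=a'_{\calS_j^*}$ (and the $m,h$ parameters matching), so Lemma~\ref{lem:LG} delivers the opposite property for $F$ and $G$, completing Conjecture~\ref{conj:global}(g). The bookkeeping needed to make the reindexing $j\leftrightarrow N-j$ between a train and its opposite train consistent --- in particular checking that the number of local chains in a train equals that of its opposite train, which should follow from condition (e) plus involutivity --- is the most delicate routine part, but the genuine conceptual obstacle is establishing the ``$F$ = minimum of extended staircases'' identity, i.e., that gluing local chains does not disturb the staircase structure off the junctions.
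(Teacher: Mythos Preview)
Your approach is essentially identical to the paper's: build each $\C_\mu$ by gluing local chains along $\rpart(\calS_i)=\lpart(\calS_{i+1})$ starting from $\tail(\mu)$, show the $\mind$-sequence $F_\mu$ is the pointwise minimum of the infinite staircases attached to the constituent local chains via Lemma~\ref{lem:dec-min}, and then feed the locally-opposite relations~\eqref{eq:loc-opp} into Lemma~\ref{lem:LG} to obtain the global opposite property. The paper organizes this into five numbered steps but the content matches yours.

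Two small slips to clean up. First, in your train $\calS_1,\ldots,\calS_r$ the leftmost chain $\calS_1$ is the singleton exceptional chain, so $a_{\calS_1}$ is undefined; the starting dinv of $\C_\mu$ is $a'_{\calS_1}=a_{\calS_2}$. Second, you write that ``$\calS_1^*$ is the one-element exceptional chain $\{\gamma^*\}$,'' but since $\calS_1$ is itself a singleton, its local opposite $\calS_1^*$ must be a $\nu$-tail, namely $\tail(\mu^*)$; the definition of locally opposite exceptional chains then gives $\dinv(\gamma_0)=\ell(\mu^*)$ for the unique $\gamma_0\in\calS_1$, which is the identification you want. Your subsequent description of $\mu^*$ via the train containing $\calS_r^*$ is correct, so this is just a momentary reversal of roles rather than a structural error.
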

\begin{proof}
Assume Conjecture~\ref{conj:local} holds for a fixed deficit value $k\geq 0$.
We prove the conclusions of Conjecture~\ref{conj:global} for partitions
 $\mu$ of $k$.

\emph{Step 1.} We construct the global chains $\C_{\mu}$ for $\mu\in\Par(k)$.
Start with the local chain $\calS=\tail(\mu)$, which belongs to $\LL$.
By~\ref{conj:local}(b) and (c), there is a unique $\calS'\in\LL$
with $\rpart(\calS')=\lpart(\calS)=\{\TI(\mu)\}$. If this chain $\calS'$
is not a singleton, then there is a unique $\calS''\in\LL$
with $\rpart(\calS'')=\lpart(\calS')$. We continue to paste together
overlapping local chains in this way until eventually terminating at
an exceptional (singleton) chain. Such a chain must be reached in finitely
many steps, since the minimum dinv value for each local chain strictly
decreases as we proceed. At the end, we have
\begin{equation}\label{eq:Cmu}
 \C_{\mu}=\calS_0\cup\calS_1\cup\cdots\cup \calS_c, 
\end{equation}
where $\calS_0=\{\gamma\}$, $\calS_c=\tail(\mu)$, 
and $\rpart(\calS_j)=\lpart(\calS_{j+1})$ for $0\leq j<c$.
The last condition shows that $\calS_{j+1}$ is uniquely
determined by $\calS_j$. Iterating this, we see that
$\tail(\mu)$ and hence $\mu$ are uniquely determined
by any $\calS_i$ in~\eqref{eq:Cmu}. So two chains constructed 
in this way from two different partitions $\mu$ must be disjoint.
So far, we have built chains $\C_{\mu}$ (for $\mu\in\Par(k)$)
satisfying conditions~\ref{conj:global}(a), (b), (c), and (e),
except for the claim $a=\ell(\mu^*)$ that will be proved later.
Since $a$ is the minimum value of dinv among all objects
in $\C_{\mu}$, we see from the construction
that $a=\dinv(\gamma)$ for the unique $\gamma$ in $\calS_0$.

\emph{Step 2.} We prove two formulas expressing the $\mind$-sequence $F_{\mu}$ 
of $\C_{\mu}$ as the pointwise minimum of staircase sequences 
determined by the chains $\calS_0,\calS_1,\ldots,\calS_c$ in~\eqref{eq:Cmu}. 
For brevity, write $a_j=a_{\calS_j}$ for $0\leq j\leq c$, and 
define $m_j$, $h_j$, $a'_j$, $m'_j$, and $h'_j$ similarly.
Let $F_j$ denote the infinite $(a_j,m_j,h_j)$-staircase, and
let $F'_j$ denote the infinite $(a'_j,m'_j,h'_j)$-staircase. We claim
\begin{equation}\label{eq:min-formula}
 \min_{0<j\leq c} F_j=F_{\mu}=\min_{0\leq j<c} F'_j.
\end{equation}
Since $\rpart(S_{j-1})=\lpart(S_{j})$ for $0<j\leq c$, we must have
$a_{j-1}'=a_{j}$, $m_{j-1}'=m_{j}$, $h_{j-1}'=h_{j}$, and hence
$F_{j-1}'=F_{j}$ for $0<j\leq c$ (compare to Figure~\ref{fig:lc-graph}). 
So the second equality in~\eqref{eq:min-formula} follows from the first
one. We prove the first equality with the help of Lemma~\ref{lem:dec-min}.
Let $F=\min(F_1,\ldots,F_c)$.
By definition of local chains, $a_{j-1}+m_{j-1}<a'_{j-1}=a_j$ and 
\[ F_j(a_j)=h_j=h'_{j-1}<F_{j-1}(a'_{j-1}-1)\leq F_{j-1}(a'_{j-1}) 
   =F_{j-1}(a_j) \]
for $1<j\leq c$. So for $i$ in the range $a_j\leq i<a_{j+1}$
(taking $a_{c+1}=\infty$), the lemma tells us that $F(i)=F_j(i)$. 
Thus it suffices to show that $F_{\mu}(i)=F_j(i)$ for all $i$ in this range.

If $j<c$ and $i$ satisfies $a_j\leq i<a_{j+1}=a'_j$, then the unique object
$C_{\mu,i}$ in $\C_{\mu}$ with $\dinv(C_{\mu,i})=i$ belongs to the
left part or middle part of $\calS_j$. Then $F_{\mu}(i)=\mind(C_{\mu,i})
=F_j(i)$ by the third condition in the definition of an ordinary local chain.
On the other hand, if $a_c\leq i$, then $F_{\mu}(i)=F_c(i)$ because
the $\mind$-sequence of the $\nu$-tail $\calS_c$ is known to be the 
infinite $(a_c,m_c,h_c)$-staircase $F_c$. 

\emph{Step 3.} We construct the involution $\mu\mapsto\mu^*$ on $\Par(k)$
and verify the opposite property~\ref{conj:global}(g). Fix $\mu\in\Par(k)$ and 
consider the local chains $\calS_0,\calS_1,\ldots,\calS_c$ in~\eqref{eq:Cmu}.
Using~\ref{conj:local}(d), let $\calS_0^*,\calS_1^*,\ldots,\calS_c^*$ be the
corresponding chains in $\LL$ such that $\calS_j$ and $\calS_j^*$ are
locally opposite for $0\leq j\leq c$. The exceptional chain $\calS_0=\{\gamma\}$
must be locally opposite to some $\nu$-tail, so that 
$\calS_0^*=\tail(\mu^*)$ for some partition $\mu^*$ of $k$.
Using~\ref{conj:local}(e), since $\calS_0$ and $\calS_1$ overlap,
$\calS_0^*$ and $\calS_1^*$ must overlap as well, and in fact
$\rpart(\calS_1^*)=\lpart(\calS_0^*)$. Similarly, since $\calS_1$
and $\calS_2$ overlap, $\calS_1^*$ and $\calS_2^*$ must also overlap,
with $\rpart(\calS_2^*)=\lpart(\calS_1^*)$. We can continue this reasoning
until reaching $\calS_c^*$, which is locally opposite to $\calS_c=\tail(\mu)$
and must therefore be a singleton chain. We conclude that
the decomposition~\eqref{eq:Cmu} for the chain $\C_{\mu^*}$ looks like
\begin{equation}\label{eq:Cmu*}
 \C_{\mu^*}=\calS_c^*\cup\calS_{c-1}^*\cup\cdots\cup \calS_1^*\cup\calS_0^*.
\end{equation}
We now apply Lemma~\ref{lem:LG} with $N=c+1$, $a_j=a_{\calS_j}$, 
$b_j=a'_{\calS^*_j}$, $m_j=m_{\calS_j}=m'_{\calS^*_j}$, and 
$h_j=h_{\calS_j}=h'_{\calS^*_j}$ for $0<j<N$.
Define $F_j$, $G_j$, $F$, and $G$ as in the lemma.
The first equality in~\eqref{eq:min-formula} shows that $F=F_{\mu}$.
The second equality in~\eqref{eq:min-formula} 
(applied to $\mu^*$, and keeping in mind the reversal of the index
order in~\eqref{eq:Cmu*}) shows that $G=F_{\mu^*}$.
Since $\calS_j$ and $\calS^*_j$ are locally opposite chains,
the lemma hypothesis~\eqref{eq:LG-hyp} follows from
the first equation in~\eqref{eq:loc-opp}, which holds even if $j=c$.
Therefore, the lemma applies to show that $F_{\mu}$
and $F_{\mu^*}$ have the opposite property for deficit $k$.
As seen in~\S\ref{subsec:opp-mind},
this property implies $\Cat_{n,\C_{\mu^*}}(t,q)=\Cat_{n,\C_{\mu}}(q,t)$.

\emph{Step 4.} We prove $a=\ell(\mu^*)$ in~\ref{conj:global}(c)
and deduce~\ref{conj:global}(d). With the above notation, we know
$\calS_0=\{\gamma\}$ is locally opposite to $\calS_0^*=\tail(\mu^*)$,  
so $\dinv(\gamma)=\ell(\mu^*)$ by definition. We saw that
$a=\dinv(\gamma)$ at the end of Step~1. Finally, Lemma 6.11 of~\cite{LLL18} 
proves that part (d) of Conjecture~\ref{conj:global} follows automatically
from parts (a), (b), and (c), which are already known.

\emph{Step 5.} We use~\ref{conj:local}(f) to prove~\ref{conj:global}(f).
By construction, each global chain $\C_{\mu}$ is an overlapping union
of certain local chains $\calS\in\LL$. 
By~\ref{conj:local}(b) and (c), we can also
regard each global chain $\C_{\mu}$ as the disjoint union of
the left and middle parts of these same local chains.  
Since each set $\lpart(\calS)\cup\mpart(\calS)$ 
is a union of $\nu$-segments by hypothesis,
so is the global chain $\C_{\mu}$. This proves that $\C_{\mu}$ is
closed under $\nu$ (and $\nu^{-1}$).
\end{proof}

\section{Global and Local Chains for $|\mu|\leq \maxk$} 
\label{sec:specific-chains}

This section presents specific chains $\C_{\mu}$ satisfying
Conjectures~\ref{conj:local} and~\ref{conj:global} for all
deficit partitions $\mu$ of size at most \maxk. 
First we show that any putative global chain $\C_{\mu}$ can be
decomposed into an overlapping union of local chains in at most one way.
We will see that the local opposite property of the local
chains comprising $\C_{\mu}$ and $\C_{\mu^*}$ can be checked quite easily,
in contrast to the global opposite property from~\cite{LLL18}.
We give an example of this process by presenting the complete verification
for deficit partitions $\mu$ of size $4$. The appendix to~\cite{LLL18}
lists specific global chains $\C_{\mu}$ that happen to satisfy the new 
local conjecture for all $\mu$ with $0\leq |\mu|\leq 6$. So we do not
repeat that data here. However, for deficit values larger than $6$,
some new chains are needed.  We list these chains (and the data needed to 
verify the local opposite property) in the appendix at the end of 
this section. This proves the joint symmetry of the terms in $\Cat_n(q,t)$
of degree $\binom{n}{2}-k$ for all $n\geq 0$ and all $k\leq \maxk$.

\subsection{Decomposing Global Chains into Local Chains}
\label{subsec:global-to-local}

A given global chain $\C_{\mu}$ is an infinite sequence
$(\gamma(i):i\geq d)$ of Dyck partitions of deficit $k$.
A convenient way to present such a chain is by specifying
the initial partitions of the $\nu$-segments comprising $\C_{\mu}$.
This is a finite list that ends with the partition $\TI(\mu)$,
which generates the $\nu$-segment $\tail(\mu)$. Now, it is a simple
matter to compute the finite $\nu$-segments starting at these
initial partitions and tabulate the values of $\dinv$ and $\mind$
for the resulting objects.  We thereby obtain a two-line array,
which needs to be of the form
\[ F_{\mu}=\left[\begin{array}{rrrrrrrr}
\dinv: & d & d+1 & d+2 & \cdots & e-1 & e & \cdots \\
\mind: &w_d&w_{d+1} & w_{d+2} & \cdots & w_{e-1} & w_e & \cdots
\end{array}\right], \]
where $w_i=\mind(\gamma(i))$ and $i=\dinv(\gamma(i))$ for all $i\geq d$.
We can terminate the display at $e=\dinv(\TI(\mu))$, 
since the right end of the array (starting with the values for $\TI(\mu)$) 
is known to be an infinite $(e,0,w_e)$-staircase by 
Proposition~\ref{prop:nu-stair}.

We now show that the global chain $\C_{\mu}$ can be decomposed into
an overlapping union of local chains $\calS_i$ in at most one way.
This decomposition is readily deduced from the word $w=w_dw_{d+1}\cdots w_e$. 
On one hand, we know the local chain decomposition must begin with the
exceptional local chain $\calS_0=\{\gamma(d)\}$ and end with
the exceptional local chain $\calS_c=\tail(\mu)=\nu^*(\TI(\mu))$.
On the other hand, we can uniquely build the local chains $\calS_i$ for 
$i=0,1,2,\ldots,c$ as follows. (Keep in mind Figure~\ref{fig:lc-graph},
especially the arrows showing ascents and descents forced by the
definition of local chains.) 
Scan $w$ from left to right, looking for \emph{descent
positions} $i$ where $w_i>w_{i+1}$. Each such descent marks a place
where the middle part of the current local chain ends and the
left part of the next local chain begins. The length of this new left
part is the unique $m$ such that $w_{i+1}=w_{i+2}=\cdots=w_{i+m}<w_{i+m+1}$.
Also, the right part of the old local chain equals the left part
of the new local chain. This process determines the values
of $a$, $a'$, $m$, $m'$, $h$, and $h'$ for each local chain $\calS_i$.
We must also check that the restriction of $F_{\mu}$ to each subinterval
$\{a,a+1,\ldots,a'-1\}$ is an $(a,m,h)$-staircase, as required by
the definition of local chains.

\begin{example}
Let $k=6$ and $\mu=(42)$, so $\TI(\mu)=(554221)$. Given the global chain
\[ \C_{(42)}=\nu^*(3111111)\cup \nu^*(42221)\cup \nu^*(44411)
            \cup \nu^*(554221), \]
let us find the constituent local chains for $\C_{\mu}$. The array
of $(\dinv,\mind)$ values for the beginning of this chain is:
\begin{equation}\label{eq:F42-array}
 F_{(42)}=\left[\begin{array}{r|rr|rrr|rrrrr|rrr}
\dinv: & 3 & 4 & 5 & 6 & 7 & 8 & 9 & 10 & 11 & 12 & 13 & 14 & \cdots \\
\mind: & 8 & 9 & 6 & 7 & 7 & 7 & 7 & 7  & 7  & 8  & 7 &  8  & \cdots
\end{array}\right], 
\end{equation}
where the bars show where $\nu$-segments begin and end.  
The word of $\mind$-values (with descents marked) is 
$w=8,9>6,\underline{7}^6,8>7,\underline{8}^7,\underline{9}^8,\cdots$.
Following the procedure above, we find the local chains
\begin{align*}
 \calS_0 &=\{(3111111)\},\\
 \calS_1 &=\nu^*(3111111)\cup\{(42221)\},\\
 \calS_2 &=\nu^*(42221)\cup\nu^*(44411)\cup\{(554221)\},\\
 \calS_3 &=\nu^*(554221)=\tail(\mu). 
\end{align*}
Note that $\lpart(\calS_2)\cup\mpart(\calS_2)$ is the union of
two $\nu$-segments, and the necessary staircase property does hold.
Because of the overlapping left and right parts of consecutive local
chains, we can conveniently present the values of $a,a',m,m',h,h'$
for all the local chains in a table such as the following:

\begin{center}
\begin{tabular}{ll|lllll}
 & & $\calS_0$ & $\calS_1$ & $\calS_2$ & $\calS_3$ & \\\hline
$a$ & $a'$ & --- & 3 & 5 & 13 & --- \\
$m$ & $m'$ & --- & 0 & 0 & 0  & --- \\
$h$ & $h'$ & --- & 8 & 6 & 7  & --- \\\hline
\end{tabular}
\end{center}

The entries in columns $1$ and $2$ mean that $a,m,h$ are undefined 
for $\calS_0$, while $a'=3$, $m'=0$, $h'=8$ for $\calS_0$. Reading
columns $2$ and $3$, we see that $(a,m,h)=(3,0,8)$ for $\calS_1$,
whereas $(a',m',h')=(5,0,6)$ for $\calS_1$. Next we find
$(a,m,h)=(5,0,6)$ and $(a',m',h')=(13,0,7)$ for $\calS_2$.
Finally, $(a,m,h)=(13,0,7)$ while $(a',m',h')$ are undefined
for $\calS_3$. All of these values were found from inspection
of~\eqref{eq:F42-array} (compare to Figure~\ref{fig:lc-graph}).  
For brevity, we use a shorter version of the table in the appendix.
In this example, the abbreviated version consists of the three 
vectors $a=(3,5,13)$, $m=(0,0,0)$, and $h=(8,6,7)$.
\end{example}

\subsection{Verifying the Local Opposite Property}
\label{subsec:verify-local-opp}

Our next example shows how to check the local opposite property for
the local chains comprising given global chains $\C_{\mu}$ and $\C_{\mu^*}$.

\begin{example}
For $\mu=(42)$, we have $\mu^*=(411)$, $\TI(411)=(6654211)$, and
\[ \C_{(411)}=\nu^*(221111)\cup\nu^*(33211)\cup\nu^*(6654211). \]
Proceeding as we did above, we find
\begin{equation}\label{eq:F411-array}
 F_{(411)}=\left[\begin{array}{r|rr|rrrrrrrr|rrr}
\dinv:& 2 & 3 & 4 & 5 & \cdots & 10 & 11 & \cdots & 17 & 18 & 19 & 20 & \cdots\\
\mind:& 7 & 8 & 6 & 7 & \cdots & 7  & 8  & \cdots & 8  &  9 &  8 &  9 & \cdots
\end{array}\right], 
\end{equation}
leading to local chains $\calT_i$ with parameters shown here:

\begin{center}
\begin{tabular}{ll|lllll}
 & & $\calT_0$ & $\calT_1$ & $\calT_2$ & $\calT_3$ & \\\hline
$a$ & $a'$ & --- & 2 & 4 & 19 & --- \\
$m$ & $m'$ & --- & 0 & 0 & 0  & --- \\
$h$ & $h'$ & --- & 7 & 6 & 8  & --- \\\hline
\end{tabular}
\end{center}

To see that $\C_{(411)}$ is globally opposite to $\C_{(42)}$,
we check that $\calT_{3-i}$ is locally opposite to $\calS_i$
for $i=0,1,2,3$, as follows. First, note that the $m$-vector
for $(411)$ is the reverse of the $m$-vector for $(42)$, and
the $h$-vector for $(411)$ is the reverse of the $h$-vector for $(42)$.
Second, note that the first object in $\C_{(42)}$ has
dinv $3=\ell(411)$, while the first object in $\C_{(411)}$ has
dinv $2=\ell(42)$. Third, we directly verify equations~\eqref{eq:loc-opp}
by computing
\[ 3+0+6+19=28=\binom{8}{2};\ \ 
   5+0+6+4 =15=\binom{6}{2};\ \ 
   13+0+6+2=21=\binom{7}{2}. \] 
\end{example}

\begin{example}
For $\mu$ of size $4$, we present the global chains $\C_{\mu}$ 
from the appendix to~\cite{LLL18} and confirm the local opposite property.
We list the initial objects of each $\nu$-segment of $\C_{\mu}$,
followed by the local chain parameters in abbreviated form.
We can check by inspection the reversal properties of the $m$-vectors 
and $h$-vectors, as well as the equality of the least
dinv value in $\C_{\mu}$ and the length of $\mu^*$.
We also show the verification of~\eqref{eq:loc-opp} in each case.  

\smallskip\noindent\emph{Involution on partitions of $4$:}
 $(4)^*=(4)$; $(31)^*=(22)$; $(211)^*=(1111)$.

\smallskip\noindent\emph{Global Chain $\C_{(4)}$:}
$(11111)$, $(2221)$, $(3331)$, $(44321)$.  
\\{}Local chain parameters:
$a=(1,3,10)$, $m=(0,0,0)$, $h=(6,5,6)$.

This chain is self-opposite, so the $m$-vector and $h$-vector
are palindromes.  We verify $1+0+4+10=\binom{6}{2}$, $3+0+4+3=\binom{5}{2}$,
and $10+0+4+1=\binom{6}{2}$ (this last check is redundant).

\medskip\noindent\emph{Global Chain $\C_{(31)}$:}
$(2211)$, $(44311)$.  
\\{}Local chain parameters:
$a=(2,9)$, $m=(0,0)$, $h=(5,6)$.

\noindent\emph{Global Chain $\C_{(22)}$:}
$(21111)$, $(3221)$.  
\\{}Local chain parameters:
$a=(2,4)$, $m=(0,0)$, $h=(6,5)$.

We verify $2+0+4+4=\binom{5}{2}$ and $9+0+4+2=\binom{6}{2}$.

\medskip\noindent\emph{Global Chain $\C_{(211)}$:}
$(32111)$, $(43111)$, $(44211)$.  
\\{}Local chain parameters:
$a=(4,6,8)$, $m=(0,0,0)$, $h=(6,6,6)$.

\noindent\emph{Global Chain $\C_{(1111)}$:}
$(31111)$, $(42111)$, $(43211)$.  
\\{}Local chain parameters:
$a=(3,5,7)$, $m=(0,0,0)$, $h=(6,6,6)$.

We verify $4+0+4+7=6+0+4+5=8+0+4+3=\binom{6}{2}$.
\end{example}

\subsection{Appendix: Chain Data}
\label{subsec:appendix}

This appendix lists the global chains and values of $a$, $m$, $h$
for all deficit partitions $\mu$ with $7\leq |\mu|\leq 9$. The
online extended appendix~\cite{online-app}
presents this information for $k=10$ and $k=11$.
In the data below, initial objects that do \emph{not} start new local
chains are marked $N$.

{\footnotesize
\smallskip\noindent\emph{Involution on partitions of $7$:}
 $(7)^*=(7)$; $(31111)^*=(3211)$; $(211111)^*=(1111111)$;
 $(61)^*=(331)$; $(52)^*=(52)$; $(511)^*=(4111)$; $(43)^*=(322)$;
 $(421)^*=(421)$; $(2221)^*=(2221)$; $(22111)^*=(22111)$.

\noindent 
\\$\C_{(7)}$: $(11111111)$, $(22222)$, $(33331)$, $(44432)$, $(555421)^N$, $(6664321)^N$, $(77654321)$.
\\ $a=(1,3,6,10,28)$, $m=(0,1,2,1,0)$, $h=(9,7,7,7,9)$.
\\$\C_{(31111)}$: $(3311111)$, $(5311111)$, $(5331111)$, $(6431111)$, $(6442111)$, $(6542211)$, $(77643211)$.
\\ $a=(4,6,8,10,12,14,24)$, $m=(0,0,0,0,0,0,0)$, $h=(8,8,8,8,8,8,9)$.
\\$\C_{(3211)}$: $(42111111)$, $(4421111)$, $(5422111)$, $(5532111)$, $(6533111)$, $(6643111)$, $(6644211)$.
\\ $a=(5,7,9,11,13,15,17)$, $m=(0,0,0,0,0,0,0)$, $h=(9,8,8,8,8,8,8)$.
\\$\C_{(211111)}$: $(43211111)$, $(54211111)$, $(54321111)$, $(65321111)$, $(65431111)$, $(75432111)$, $(76532111)$, $(76543111)$, $(77543211)$.
\\ $a=(7,9,11,13,15,17,19,21,23)$, $m=(0,0,0,0,0,0,0,0,0)$, $h=(9,9,9,9,9,9,9,9,9)$.
\\$\C_{(1111111)}$: $(43111111)$, $(53211111)$, $(54311111)$, $(64321111)$, $(65421111)$, $(65432111)$, $(76432111)$, $(76542111)$, $(76543211)$.
\\ $a=(6,8,10,12,14,16,18,20,22)$, $m=(0,0,0,0,0,0,0,0,0)$, $h=(9,9,9,9,9,9,9,9,9)$.
\\$\C_{(61)}$: $(511111)$, $(3333)$, $(44422)$, $(554421)^N$, $(77654311)$.
\\ $a=(3,5,9,27)$, $m=(0,2,1,0)$, $h=(7,7,7,9)$.
\\$\C_{(331)}$: $(21111111)$, $(32222)$, $(43331)$, $(544311)$.
\\ $a=(2,4,7,11)$, $m=(0,1,2,0)$, $h=(9,7,7,7)$.
\\$\C_{(52)}$: $(2211111)$, $(33221)$, $(555311)^N$, $(6654221)$.
\\ $a=(2,4,19)$, $m=(0,0,0)$, $h=(8,6,8)$.
\\$\C_{(511)}$: $(32111111)$, $(541111)$, $(44322)$, $(77654211)$.
\\ $a=(4,6,8,26)$, $m=(0,0,1,0)$, $h=(9,7,7,9)$.
\\$\C_{(4111)}$: $(31111111)$, $(42222)$, $(533211)$, $(77653211)$.
\\ $a=(3,5,8,25)$, $m=(0,1,0,0)$, $h=(9,7,7,9)$.
\\$\C_{(43)}$: $(322111)$, $(441111)$, $(44222)$, $(554111)$, $(553321)$.
\\ $a=(3,5,7,10,12)$, $m=(0,0,1,0,0)$, $h=(7,7,7,7,7)$.
\\$\C_{(322)}$: $(222111)$, $(332111)$, $(43222)$, $(544111)$, $(553221)$.
\\ $a=(2,4,6,9,11)$, $m=(0,0,1,0,0)$, $h=(7,7,7,7,7)$.
\\$\C_{(421)}$: $(4111111)$, $(522111)$, $(443111)$, $(552211)$, $(6653311)$.
\\ $a=(3,5,7,9,18)$, $m=(0,0,0,0,0)$, $h=(8,7,7,7,8)$.
\\$\C_{(2221)}$: $(521111)$, $(433111)$, $(552111)$, $(543311)$.
\\ $a=(4,6,8,10)$, $m=(0,0,0,0)$, $h=(7,7,7,7)$.
\\$\C_{(22111)}$: $(4221111)$, $(532211)$, $(6553211)$.
\\ $a=(5,7,16)$, $m=(0,0,0)$, $h=(8,7,8)$.

\smallskip\noindent\emph{Involution on partitions of $8$:}
 $(8)^*=(8)$; $(4211)^*=(4211)$; $(41111)^*=(41111)$;
 $(32111)^*=(32111)$; $(311111)^*=(311111)$; $(2111111)^*=(11111111)$;
$(71)^*=(44)$; $(62)^*=(5111)$; $(611)^*=(521)$; $(53)^*=(2222)$;
$(3221)^*=(422)$; $(431)^*=(332)$; $(3311)^*=(3311)$; 
$(22211)^*=(22211)$; $(221111)^*=(221111)$.

\noindent 
\\$\C_{(8)}$: $(111111111)$, $(222221)$, $(33332)$, $(444321)$, $(555431)^N$, $(6665321)^N$, $(77754321)^N$, $(887654321)$.
\\ $a=(1,3,6,10,36)$, $m=(0,0,1,0,0)$, $h=(10,7,7,7,10)$.
\\$\C_{(4211)}$: $(33111111)$, $(5411111)$, $(5332111)$, $(6531111)$, $(6443111)$, $(6642211)$, $(77644211)$.
\\ $a=(4,6,8,10,12,14,24)$, $m=(0,0,0,0,0,0,0)$, $h=(9,8,8,8,8,8,9)$.
\\$\C_{(41111)}$: $(421111111)$, $(4431111)$, $(6422111)$, $(5542111)$, $(6533211)$, $(887643211)$.
\\ $a=(5,7,9,11,13,32)$, $m=(0,0,0,0,0,0)$, $h=(10,8,8,8,8,10)$.
\\$\C_{(32111)}$: $(42211111)$, $(44211111)$, $(54221111)$, $(55321111)$, $(65331111)$, $(75431111)$, $(75532111)$, $(76533111)$, $(77543111)$, $(77553211)$.
\\ $a=(5,7,9,11,13,15,17,19,21,23)$, $m=(0,0,0,0,0,0,0,0,0,0)$, $h=(9,9,9,9,9,9,9,9,9,9)$.
\\$\C_{(311111)}$: $(431111111)$, $(53311111)$, $(64311111)$, $(64421111)$, $(65422111)$, $(66432111)$, $(76442111)$, $(76542211)$, $(887543211)$.
\\ $a=(6,8,10,12,14,16,18,20,31)$, $m=(0,0,0,0,0,0,0,0,0)$, $h=(10,9,9,9,9,9,9,9,10)$.
\\$\C_{(2111111)}$: $(532111111)$, $(543111111)$, $(643211111)$, $(654211111)$, $(654321111)$, $(764321111)$, $(765421111)$, $(765432111)$, $(875432111)$, $(876532111)$, $(876543111)$, $(886543211)$.
\\ $a=(8,10,12,14,16,18,20,22,24,26,28,30)$, $m=(\underline{0}^{12})$, 
$h=(\underline{10}^{12})$.
\\$\C_{(11111111)}$: $(432111111)$, $(542111111)$, $(543211111)$, $(653211111)$, $(654311111)$, $(754321111)$, $(765321111)$, $(765431111)$, $(865432111)$, $(876432111)$, $(876542111)$, $(876543211)$.
\\ $a=(7,9,11,13,15,17,19,21,23,25,27,29)$, 
$m=(\underline{0}^{12})$, $h=(\underline{10}^{12})$.
\\$\C_{(71)}$: $(222211)$, $(33322)$, $(444311)$, $(555331)^N$, $(6655321)^N$, $(887654311)$.
\\ $a=(2,5,9,35)$, $m=(0,1,0,0)$, $h=(7,7,7,10)$.
\\$\C_{(44)}$: $(211111111)$, $(322221)$, $(43332)$, $(544321)$.
\\ $a=(2,4,7,11)$, $m=(0,0,1,0)$, $h=(10,7,7,7)$.
\\$\C_{(62)}$: $(321111111)$, $(551111)$, $(44332)$, $(6664311)^N$, $(77654221)$.
\\ $a=(4,6,8,26)$, $m=(0,0,1,0)$, $h=(10,7,7,9)$.
\\$\C_{(5111)}$: $(22111111)$, $(33222)$, $(443211)$, $(887653211)$.
\\ $a=(2,4,7,33)$, $m=(0,1,0,0)$, $h=(9,7,7,10)$.
\\$\C_{(611)}$: $(41111111)$, $(522211)$, $(444211)$, $(554331)^N$, $(887654211)$.
\\ $a=(3,5,8,34)$, $m=(0,0,0,0)$, $h=(9,7,7,10)$.
\\$\C_{(521)}$: $(311111111)$, $(422221)$, $(533311)$, $(554411)^N$, $(77653311)$.
\\ $a=(3,5,8,25)$, $m=(0,0,0,0)$, $h=(10,7,7,9)$.
\\$\C_{(53)}$: $(422211)$, $(444111)$, $(552221)$, $(555221)^N$, $(6653321)$.
\\ $a=(4,7,9,18)$, $m=(0,0,0,0)$, $h=(7,7,7,8)$.
\\$\C_{(2222)}$: $(2221111)$, $(333111)$, $(432221)$, $(543221)$.
\\ $a=(2,4,6,9)$, $m=(0,0,0,0)$, $h=(8,7,7,7)$.
\\$\C_{(3221)}$: $(3221111)$, $(432211)$, $(6553111)$, $(6643311)$.
\\ $a=(3,5,14,16)$, $m=(0,0,0,0)$, $h=(8,7,8,8)$.
\\$\C_{(422)}$: $(5211111)$, $(4331111)$, $(542221)$, $(555211)^N$, $(6653221)$.
\\ $a=(4,6,8,17)$, $m=(0,0,0,0)$, $h=(8,8,7,8)$.
\\$\C_{(431)}$: $(322211)$, $(442211)$, $(6653111)$, $(6644311)$.
\\ $a=(3,6,15,17)$, $m=(0,0,0,0)$, $h=(7,7,8,8)$.
\\$\C_{(332)}$: $(5111111)$, $(5221111)$, $(532221)$, $(544221)$.
\\ $a=(3,5,7,10)$, $m=(0,0,0,0)$, $h=(8,8,7,7)$.
\\$\C_{(3311)}$: $(3321111)$, $(43322)$, $(6554211)$.
\\ $a=(4,6,16)$, $m=(0,1,0)$, $h=(8,7,8)$.
\\$\C_{(22211)}$: $(4411111)$, $(5322111)$, $(5531111)$, $(6433111)$, $(6642111)$, $(6544211)$.
\\ $a=(5,7,9,11,13,15)$, $m=(0,0,0,0,0,0)$, $h=(8,8,8,8,8,8)$.
\\$\C_{(221111)}$: $(53111111)$, $(6421111)$, $(5442111)$, $(6532211)$, $(76643211)$.
\\ $a=(6,8,10,12,22)$, $m=(0,0,0,0,0)$, $h=(9,8,8,8,9)$.

\smallskip\noindent\emph{Involution on partitions of $9$:}
$(9)^*=(9)$, $(54)^*=(54)$, $(531)^*=(432)$, $(522)^*=(33111)$,
$(5211)^*=(51111)$, $(4311)^*=(4221)$, $(42111)^*=(411111)$,
$(3321)^*=(3321)$, $(32211)^*=(3222)$, $(321111)^*=(321111)$,
$(3111111)^*=(3111111)$, $(22221)^*=(22221)$, $(2211111)^*=(222111)$,
$(21111111)^*=(21111111)$, $(111111111)^*=(111111111)$,
$(81)^*=(441)$, $(72)^*=(621)$, $(711)^*=(711)$, $(63)^*=(63)$,
$(6111)^*=(333)$.

\noindent
$\C_{(9)}$: $(1111111111)$, $(222222)$, $(333321)$, $(444421)^N$, $(555432)$, $(6665421)^N$, $(77764321)^N$, $(888654321)^N$, $(9987654321)$.
\\ $a=(1,3,6,15,45)$, $m=(0,1,0,1,0)$, $h=(11,8,7,8,11)$.
\\$\C_{(54)}$: $(2222111)$, $(3331111)$, $(442221)$, $(555111)^N$, $(553331)^N$, $(6654111)$, $(6644321)$.
\\ $a=(2,4,6,15,17)$, $m=(0,0,0,0,0)$, $h=(8,8,7,8,8)$.
\\$\C_{(531)}$: $(3222111)$, $(6311111)$, $(6331111)$, $(6441111)$, $(554222)$, $(6652211)$, $(77644311)$.
\\ $a=(3,5,7,9,11,14,24)$, $m=(0,0,0,0,1,0,0)$, $h=(8,8,8,8,8,8,9)$.
\\$\C_{(432)}$: $(51111111)$, $(5222111)$, $(532222)$, $(5533111)$, $(6633111)$, $(6644111)$, $(6644221)$.
\\ $a=(3,5,7,10,12,14,16)$, $m=(0,0,1,0,0,0,0)$, $h=(9,8,8,8,8,8,8)$.
\\$\C_{(522)}$: $(52211111)$, $(533221)$, $(6664211)^N$, $(77653221)$.
\\ $a=(5,7,24)$, $m=(0,0,0)$, $h=(9,7,9)$.
\\$\C_{(33111)}$: $(32211111)$, $(433211)$, $(76653211)$.
\\ $a=(3,5,22)$, $m=(0,0,0)$, $h=(9,7,9)$.
\\$\C_{(5211)}$: $(4211111111)$, $(4432111)$, $(6522111)$, $(5543111)$, $(6633211)$, $(887644211)$.
\\ $a=(5,7,9,11,13,32)$, $m=(0,0,0,0,0,0)$, $h=(11,8,8,8,8,10)$.
\\$\C_{(51111)}$: $(331111111)$, $(6411111)$, $(6332111)$, $(6541111)$, $(6443211)$, $(9987643211)$.
\\ $a=(4,6,8,10,12,41)$, $m=(0,0,0,0,0,0)$, $h=(10,8,8,8,8,11)$.
\\$\C_{(4311)}$: $(52111111)$, $(4422111)$, $(542222)$, $(6442211)$, $(77643111)$, $(77554211)$.
\\ $a=(4,6,8,11,21,23)$, $m=(0,0,1,0,0,0)$, $h=(9,8,8,8,9,9)$.
\\$\C_{(4221)}$: $(33211111)$, $(54111111)$, $(5522111)$, $(553222)$, $(6552211)$, $(77643311)$.
\\ $a=(4,6,8,10,13,23)$, $m=(0,0,0,1,0,0)$, $h=(9,9,8,8,8,9)$.
\\$\C_{(42111)}$: $(4311111111)$, $(53321111)$, $(65311111)$, $(64431111)$, $(75422111)$, $(66532111)$, $(76443111)$, $(77542211)$, $(887553211)$.
\\ $a=(6,8,10,12,14,16,18,20,31)$, $m=(0,0,0,0,0,0,0,0,0)$, $h=(11,9,9,9,9,9,9,9,10)$.
\\$\C_{(411111)}$: $(422111111)$, $(44311111)$, $(64221111)$, $(55421111)$, $(65332111)$, $(76431111)$, $(75542111)$, $(76533211)$, $(9987543211)$.
\\ $a=(5,7,9,11,13,15,17,19,40)$, $m=(0,0,0,0,0,0,0,0,0)$, $h=(10,9,9,9,9,9,9,9,11)$.
\\$\C_{(3321)}$: $(6211111)$, $(433311)$, $(544411)^N$, $(6553311)$.
\\ $a=(4,6,15)$, $m=(0,0,0)$, $h=(8,7,8)$.
\\$\C_{(32211)}$: $(4222111)$, $(6321111)$, $(5441111)$, $(6432211)$, $(76643111)$, $(77544211)$.
\\ $a=(4,6,8,10,20,22)$, $m=(0,0,0,0,0,0)$, $h=(8,8,8,8,9,9)$.
\\$\C_{(3222)}$: $(44111111)$, $(53221111)$, $(5433111)$, $(6632111)$, $(6544111)$, $(6643221)$.
\\ $a=(5,7,9,11,13,15)$, $m=(0,0,0,0,0,0)$, $h=(9,9,8,8,8,8)$.
\\$\C_{(321111)}$: $(531111111)$, $(533111111)$, $(643111111)$, $(644211111)$, $(654221111)$, $(664321111)$, $(764421111)$, $(765422111)$, $(775432111)$, $(875532111)$, $(876533111)$, $(886543111)$, $(886643211)$.
\\ $a=(6,8,10,12,14,16,18,20,22,24,26,28,30)$, 
$m=(\underline{0}^{13})$,
$h=(\underline{10}^{13})$.
\\$\C_{(3111111)}$: $(4321111111)$, $(542211111)$, $(553211111)$, $(653311111)$, $(754311111)$, $(755321111)$, $(765331111)$, $(865431111)$, $(866432111)$, $(876442111)$, $(876542211)$, $(9986543211)$.
\\ $a=(7,9,11,13,15,17,19,21,23,25,27,39)$, 
$m=(\underline{0}^{12})$, 
$h=(11,\underline{10}^{10},11)$.
\\$\C_{(22221)}$: $(4322111)$, $(5521111)$, $(543222)$, $(6552111)$, $(6543311)$.
\\ $a=(5,7,9,12,14)$, $m=(0,0,1,0,0)$, $h=(8,8,8,8,8)$.
\\$\C_{(2211111)}$: $(43311111)$, $(64211111)$, $(54421111)$, $(65322111)$, $(66431111)$, $(75442111)$, $(76532211)$, $(877543211)$.
\\ $a=(6,8,10,12,14,16,18,29)$, $m=(0,0,0,0,0,0,0,0)$, $h=(9,9,9,9,9,9,9,10)$.
\\$\C_{(222111)}$: $(442111111)$, $(55311111)$, $(64331111)$, $(75421111)$, $(65532111)$, $(76433111)$, $(77542111)$, $(76553211)$.
\\ $a=(7,9,11,13,15,17,19,21)$, $m=(0,0,0,0,0,0,0,0)$, $h=(10,9,9,9,9,9,9,9)$.
\\$\C_{(2\underline{1}^7)}$: $(532\underline{1}^7)$, $(543\underline{1}^7)$, 
$(6432\underline{1}^6)$, $(6542\underline{1}^6)$, $(65432\underline{1}^5)$, 
$(76432\underline{1}^5)$, $(7654211111)$, $(7654321111)$, $(8754321111)$, $(8765321111)$, $(8765431111)$, $(9765432111)$, $(9875432111)$, $(9876532111)$, $(9876543111)$, $(9976543211)$.
\\ $a=(8,10,12,14,16,18,20,22,24,26,28,30,32,34,36,38)$, 
$m=(\underline{0}^{16})$,
$h=(\underline{11}^{16})$.
\\$\C_{(\underline{1}^9)}$: $(542\underline{1}^7)$, $(5432\underline{1}^6)$, 
$(6532\underline{1}^6)$, $(6543\underline{1}^6)$, $(75432\underline{1}^5)$, 
$(76532\underline{1}^5)$, $(7654311111)$, $(8654321111)$, $(8764321111)$, $(8765421111)$, $(8765432111)$, $(9865432111)$, $(9876432111)$, $(9876542111)$, $(9876543211)$.
\\ $a=(9,11,13,15,17,19,21,23,25,27,29,31,33,35,37)$, 
$m=(\underline{0}^{15})$,
$h=(\underline{11}^{15})$.
\\$\C_{(81)}$: $(6111111)$, $(333311)$, $(44442)^N$, $(555422)$, $(6664421)^N$, $(77664321)^N$, $(9987654311)$.
\\ $a=(3,5,14,44)$, $m=(0,0,1,0)$, $h=(8,7,8,11)$.
\\$\C_{(441)}$: $(2111111111)$, $(322222)$, $(433321)$, $(544421)^N$, $(6554311)$.
\\ $a=(2,4,7,16)$, $m=(0,1,0,0)$, $h=(11,8,7,8)$.
\\$\C_{(72)}$: $(411111111)$, $(522221)$, $(444221)$, $(554431)^N$, $(77754311)^N$, $(887654221)$.
\\ $a=(3,5,8,34)$, $m=(0,0,0,0)$, $h=(10,7,7,10)$.
\\$\C_{(621)}$: $(221111111)$, $(332221)$, $(443311)$, $(6655311)^N$, $(887653311)$.
\\ $a=(2,4,7,33)$, $m=(0,0,0,0)$, $h=(10,7,7,10)$.
\\$\C_{(711)}$: $(3111111111)$, $(422222)$, $(44441)$, $(555322)$, $(6654421)^N$, $(9987654211)$.
\\ $a=(3,5,8,13,43)$, $m=(0,1,3,1,0)$, $h=(11,8,8,8,11)$.
\\$\C_{(63)}$: $(22211111)$, $(333211)$, $(533321)$, $(555411)^N$, $(6664221)^N$, $(77653321)$.
\\ $a=(2,4,8,25)$, $m=(0,0,0,0)$, $h=(9,7,7,9)$.
\\$\C_{(6111)}$: $(332211)$, $(554322)$, $(9987653211)$.
\\ $a=(3,12,42)$, $m=(0,1,0)$, $h=(7,8,11)$.
\\$\C_{(333)}$: $(3211111111)$, $(432222)$, $(543321)$.
\\ $a=(4,6,9)$, $m=(0,1,0)$, $h=(11,8,7)$.
}


\end{document}